\documentclass[a4paper,12pt]{article}

\usepackage{mathtools}

\usepackage{amstext,amssymb,amsmath,amsthm}
\usepackage{relsize,xspace,ifthen}
\usepackage[T1]{fontenc}
\usepackage{microtype}
\usepackage{enumerate}
\usepackage[utf8]{inputenc}
\usepackage{tikz}
\usetikzlibrary{arrows}
\usetikzlibrary{shapes.geometric}
\usetikzlibrary{calc, decorations.markings}
\usepackage{hyperref}
\usepackage{comment}

\usepackage{mathrsfs}
%\DeclareMathAlphabet{\pazocal}{OML}{zplm}{m}{n}

\usepackage[font=small]{caption}

\usepackage[T1]{fontenc}
    \usepackage{ae}
    \usepackage{aecompl}

\newenvironment{cenv}{\begin{list}{}{%
      \setlength{\labelwidth}{1.5em}%
      \setlength{\leftmargin}{\labelwidth}%
      \addtolength{\leftmargin}{\labelsep}%
      \setlength{\listparindent}{0em}%
      \setlength{\topsep}{10pt}%
      \setlength{\itemsep}{5pt}%
      \setlength{\parsep}{0pt}%
    }
  }{
  \end{list}
}

\newcounter{claimcounter}

\usepackage{latexsym}
\usepackage{nicefrac}

\usepackage{algorithm}
\usepackage{algpseudocode}
\usepackage{eqparbox}

\newtheorem{theorem}{Theorem}
\newtheorem{lemma}{Lemma}

\let\oldrem\rem
\renewcommand{\rem}{\oldrem\normalfont}

\numberwithin{equation}{section}
\numberwithin{theorem}{section}
\numberwithin{lemma}{section}

 \newcommand{\BBB}{\mathcal{B}}
\newcommand{\CCC}{\mathcal{C}} \newcommand{\DDD}{\mathcal{D}}
 \newcommand{\FFF}{\mathcal{F}}
 \newcommand{\HHH}{\mathcal{H}}
 
\newcommand{\KKK}{\mathcal{K}} 
\newcommand{\MMM}{\ensuremath{\mathcal{M}}} 
 \newcommand{\PPP}{\mathcal{P}}
 
 \newcommand{\TTT}{\mathcal{T}}
\newcommand{\UUU}{\mathcal{U}} 
 \newcommand{\XXX}{\mathcal{X}}
 \newcommand{\ZZZ}{\mathcal{Z}}

\renewcommand{\phi}{\varphi}
\renewcommand{\epsilon}{\varepsilon}

\newcommand{\ie}{i.e.\@\xspace}

\setlength{\marginparwidth}{1.3in}

%%%%%%%%%%%%%%%%%%%%%%%%%%%%%%%%%%%%%%%%%%%%%%%%%%%%%%%%%%

\DeclareMathOperator{\tw}{tw}
\DeclareMathOperator{\mw}{mw}
\DeclareMathOperator{\wsmw}{ws-mw}
\DeclareMathOperator{\smw}{s-mw}

\DeclareMathOperator{\hw}{\mathcal{H}w}
\DeclareMathOperator{\kw}{\mathcal{K}w}

\title{On the Medianwidth of Graphs}
\author{Konstantinos Stavropoulos \\ \small{RWTH Aachen University} \\ \small{stavropoulos@informatik.rwth-aachen.de}}
\date{}

\begin{document}

\maketitle

\begin{abstract}
A \emph{median graph} is a connected graph, 
such that for any three vertices $u,v,w$ there is exactly one vertex $x$ that lies simultaneously
on a shortest $(u,v)$-path, a shortest $(v,w)$-path and a shortest $(w,u)$-path. Examples of median graphs are trees, grids and hypercubes.

We introduce and study a generalisation of tree decompositions, to be called \emph{median decompositions}, where instead of decomposing a graph $G$ in a treelike fashion, 
we use median graphs as the underlying graph of the decomposition. We show that the corresponding width parameter $\mw(G)$, the \emph{medianwidth} of $G$,
is equal to the \emph{clique number} of the graph, while a suitable variation of it is equal to the \emph{chromatic number} of $G$.

We study in detail the \emph{$i$-medianwidth} $\mw_i(G)$ of a graph, for which we restrict the underlying median graph of a decomposition 
to be isometrically embeddable to the Cartesian product of $i$ trees.
For $i\geq 1$, the parameters $\mw_i$ constitute a hierarchy starting from treewidth and converging to the clique number.
We characterize the $i$-medianwidth of a graph to be, roughly said, the largest ``intersection'' of the best choice of
$i$ many tree decompositions of the graph. 

Lastly, we extend the concept of tree and median decompositions and propose a general framework of how to decompose a graph in any fixed graphlike fashion.
\end{abstract}

\section{Introduction}\label{sec:intro}
The notion of \emph{tree decompositions} and \emph{treewidth} was first introduced (under different names) by Halin \cite{halin1976s}.
It also arose as a natural and powerful tool in the fundamental work of Robertson and Seymour on graph minors, who reintroduced it in its more standard in the literature 
form \cite{robertson1984graph,robertson1986graph}. 

Treewidth, denoted by $\tw(G)$, can be seen as a measure of how ``treelike'' a graph is and has turned out to, actually, be a connectivity measure of graphs (see \cite{reed1997tree}). 
The usefulness of tree decompositions as a decomposition tool, especially in the theory of Graph Minors, is highlighted by various, often very general, structural theorems 
(\cite{robertson1991graph,robertson2003graph,carmesin2013canonical,carmesin2015canonical,carmesin2014connectivity,hundertmark2011profiles}).
Moreover, various NP-hard decision and optimization problems are fixed-parameter tractable when parameterized by treewidth (see \cite{FlumGro06,Bodlaender93,Bodlaender05}).

The concept of modelling a graph like a ``thick'' tree has been fundamental for the ability to study graph classes excluding a fixed minor. 
A next step to take would be to study what happens beyond those classes. A very robust approach by Ne\v set\v ril and Ossona de Mendez to study graph classes beyond bounded treewidth 
(or even excluding a fixed minor)---but more in terms of sparsity rather than trying to model the whole graph after another graph like in the spirit of tree decompositions---can
be seen here \cite{BndExpI,BndExpII,NesetrilO11,NesetrilOdM12}.

Since graphs of bounded treewidth inherit several advantages of trees, it has been tempting to investigate how to go beyond tree decompositions 
and try to model a graph on graphs other than trees (in the sense that the former has ``bounded width'' in terms of the latter), maybe as a means to study how these more general decompositions 
can be used to form structural hierarchies of graph classes. For example, Diestel and K{\"u}hn proposed a version of such general decompositions with interesting implications in \cite{diestel2005graph}, 
who also note a disadvantage in their decompositions: all graphs, when modelled like a grid, have bounded ``gridwidth''. 

A median graph is a connected graph, 
such that for any three vertices $u,v,w$ there is exactly one vertex $x$ that lies simultaneously
on a shortest $(u,v)$-path, a shortest $(v,w)$-path and a shortest $(w,u)$-path. Examples of median graphs are grids and the $i$-dimensional hypercube $Q_i$, for every $i\geq 1$. 
One of the simplest examples of median graphs are trees themselves.

One might choose to see trees as the one-dimensional median graphs under a certain perspective: for example, the topological dimension of a tree continuum is one; or amalgamating one-dimensional cubes,
namely edges, on a tree, will also produce a tree; or trees are the median graphs not containing a square (the two-dimensional cube) as an induced subgraph \cite{mulder1980interval}.

A subset $S$ of vertices of a graph is \emph{(geodesically) convex} if for every pair of vertices in $S$, all shortest paths between them only contain vertices in $S$. 
The following is the core observation that inspired this paper:

\begin{center}
 Convexity degenerates to connectedness on trees!
\end{center}

In a tree decomposition, a vertex of the graph lives in a connected subgraph of the underlying tree.
The properties of convex subsets of median graphs, one of them being the \emph{Helly Property}, 
provide the means allowing the extension of the concept of tree decompositions into the setting of median decompositions
in a rather natural way: when we use general median graphs as the underlying graph of the decomposition, a vertex of the original graph will live in a convex subgraph.
This generalisation of tree decompositions will, as a result, allow for finer decompositions of the decomposed graph.

In Section \ref{sec:prelim}, we summarize some relevant parts of the known theory on median graphs.

In Section \ref{sec:meddec}, we introduce median decompositions and study their general properties, some of which are natural translations of corresponding properties of tree decompositions.
We also prove that the corresponding width parameter $\mw(G)$ matches the \emph{clique number} $\omega(G)$ of a graph $G$, the size of its largest complete subgraph. 

Section \ref{sec:chi} is devoted to a specific variation of median decompositions, which satisfy an additional axiom ensuring more regularity for them.
Certain median decompositions, which we will call \emph{chromatic median decompositions} and arise by making use of a proper colouring of the graph, enjoy this additional regularity by their definition.
This allows us to see that the respective width parameter, to be called \emph{smooth medianwidth}, is equivalent to the \emph{chromatic number} $\chi(G)$ of $G$. 

Every median graph can be isometrically embedded into the Cartesian product of a finite number of trees. In Section \ref{sec:i-mw}, 
we consider median decompositions whose underlying median graph must be isometrically embeddable into the Cartesian product of $i$ trees, 
along with the respective medianwidth parameter, to be called $i$-medianwidth $\mw_i(G)$. 
By definition, the invariants $\mw_i$ will form a non-increasing sequence:
$$\tw(G)+1=\mw_1(G)\geq \mw_2(G)\geq \dots \geq \mw(G)=\omega(G).$$
Since they are all lower bounded by the clique number of the graph, they are alleviated by the disadvantage seen in~\cite{diestel2005graph}, 
where the ``gridwidth'' of all graphs was bounded. Note that in our setting, a decomposition in a ``gridlike'' fashion would only be a $2$-median decomposition.
Moreover, by considering complete multipartite graphs, we establish that this infinite hierarchy of parameters is proper in the strong sense that each of its levels is ``unbounded'' in the previous ones:
for $i<i'$, graphs classes of bounded $i'$-medianwidth can have unbounded $i$-medianwidth. This also provides a natural way to go beyond treewidth and obtain new ``bounded width'' 
hierarchies of the class of all graphs, now in terms of bounded $i$-medianwidth, for different $i\geq 1$. 
Lastly, the main result of the section is a characterisation of $i$-medianwidth in terms of tree decompositions: we prove that it corresponds to the largest ``intersection'' of the best choice of
$i$ many tree decompositions of the graph.

In Section \ref{sec:gated}, we discuss a general framework of how to decompose a graph $G$ in any fixed graphlike fashion, where the underlying graph of the decomposition is chosen from an arbitrary 
fixed graph class $\HHH$, and such that the most important properties of tree and median decompositions are preserved.

Finally, in Section \ref{sec:remarks} we motivate some of the various questions that arise from the study of the concept of median decompositions.

\section{Preliminaries and Median Graphs}\label{sec:prelim}

Our notation from graph theory is standard, we defer the reader to \cite{Diestel05} for the background. For a detailed view on median graphs, the reader can refer to books
\cite{feder1995stable,imrich2000product,van1993theory} and papers \cite{bandelt1983median,klavzar1999median}, or a general survey on metric graph theory and geometry \cite{bandelt2008metric}.
In this paper, every graph we consider will be finite, undirected and simple.

For $u,v \in V(G)$, a \emph{$(u,v)$-geodesic} is a shortest $(u,v)$-path. A path $P$ in $G$ is a \emph{geodesic} if there are vertices $u,v$ such that $P$ is a $(u,v)$-geodesic. 

The \emph{interval} $I(u,v)$ consists of all vertices lying on a $(u,v)$-geodesic, namely
$$I(u,v)=\{x\in V(G) \mid d(u,v)=d(u,x)+d(x,v)\}.$$ 

A graph $G$ is called \emph{median} if it is connected and 
for any three vertices $u,v,w \in V(G)$ there is a unique vertex $x$, called the \emph{median} of $u,v,w$, 
that lies simultaneously on a $(u,v)$-geodesic, $(v,w)$-geodesic and a $(w,u)$-geodesic. In other words, $G$ is median if $|I(u,v) \cap I(v,w) \cap I(w,u)|=1$, for every three vertices $u,v,w$.

A set $S\subseteq V(G)$ is called \emph{geodesically convex} or just \emph{convex} if for every $u,v\in S$, $I(u,v)\subseteq S$
(we will only talk about geodesic convexity and not other graph convexities, 
so it is safe to refer to geodesically convex sets as just convex, without confusion).
By definition, convex sets are connected. As with convex sets in Euclidean spaces (or more generally, as a prerequisite of abstract convexities), 
it is easy to see that the intersection of convex sets is again convex. Note that the induced subgraphs corresponding to convex sets of median graphs are also median graphs.

For $S\subseteq V(G)$, its \emph{convex hull} $<S>$ is the minimum convex set of $G$ containing $S$.

For the rest of the section, we present without proofs some well-known basic theory on median graphs 
and summarize some of their most important properties, that will be important for our needs throughout the paper.

Let us fast present some examples. Let $C_k$ be the cycle graph on $k$ vertices. Notice that the cycles $C_3$ and $C_k$, where $k\geq 5$,  are not median, simply because there are always 3 vertices with no median. 
As we will later see, every median graph is bipartite. On the other hand, 
apart from the even cycles of length at least six, examples of bipartite graphs that aren't median are the complete bipartite graphs $K_{n,m}$ with $n\geq 2$ and $m \geq 3$, 
since all $n$ vertices of one part are medians of every three vertices of the other part.

The \emph{$i$-dimensional hypercube} or \emph{$i$-cube} $Q_i$, $i\geq 1$, is the graph with vertex set $\{0, 1\}^i$,
two vertices being adjacent if the corresponding tuples differ in precisely one position.
They are also the only regular median graphs \cite{mulder1980n}.

The Cartesian product $G\Box H$ of graphs $G$ and $H$ is the graph with vertex set $V(G)\times V(H)$,
in which vertices $(a,x)$ and $(b,y)$ are adjacent whenever $ab\in E(G)$ and $x=y$, or $a=b$
and $xy\in E(H)$. The Cartesian product is associative and commutative with $K_1$ as its unit.
Note that the Cartesian product of $n$-copies of $K_2=Q_1$ is an equivalent definition of the $i$-cube $Q_i$.

In the Cartesian products of median graphs, medians of vertices can be seen to correspond to the tuple of the medians in every factor of the product. 
The following Lemma is folklore.

\begin{lemma}\label{cartmed}
 Let $G=\Box_{i=1}^kG_i$, where $G_i$ is median for every $i=1,\ldots,k$. Then $G$ is also median, whose convex sets are precicely the sets $C=\Box_{i=1}^kC_i$,
 where $C_i$ is a convex subset of $G_i$.
\end{lemma}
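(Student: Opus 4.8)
The plan is to reduce everything to the standard distance formula for Cartesian products. Writing $u=(u_1,\dots,u_k)$ and $v=(v_1,\dots,v_k)$, every edge of $G=\Box_{i=1}^k G_i$ changes exactly one coordinate, so any $(u,v)$-walk projects onto walks between $u_i$ and $v_i$ in each factor and minimising the total length gives
$$d_G(u,v)=\sum_{i=1}^k d_{G_i}(u_i,v_i).$$
First I would use this to establish the key structural fact that intervals split as products. A vertex $x$ satisfies $d_G(u,v)=d_G(u,x)+d_G(x,v)$ if and only if $\sum_i d_{G_i}(u_i,v_i)=\sum_i\bigl(d_{G_i}(u_i,x_i)+d_{G_i}(x_i,v_i)\bigr)$, and since each summand on the right dominates the corresponding one on the left by the triangle inequality in $G_i$, equality of the sums forces equality in every coordinate. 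Hence $I_G(u,v)=\Box_{i=1}^k I_{G_i}(u_i,v_i)$.

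From here the median property is immediate. Connectedness of $G$ follows since each $G_i$ is connected. For three vertices $u,v,w$, intersecting products coordinatewise yields
$$I_G(u,v)\cap I_G(v,w)\cap I_G(w,u)=\Box_{i=1}^k\bigl(I_{G_i}(u_i,v_i)\cap I_{G_i}(v_i,w_i)\cap I_{G_i}(w_i,u_i)\bigr),$$
and each factor is a single vertex because $G_i$ is median; a product of singletons is a singleton, so the median of $u,v,w$ exists and is unique, namely the tuple of the factorwise medians.

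For the description of convex sets, one inclusion is easy: if each $C_i$ is convex, then for $u,v\in C=\Box_i C_i$ the interval $I_G(u,v)=\Box_i I_{G_i}(u_i,v_i)$ is contained in $\Box_i C_i=C$, so $C$ is convex. The substantive direction is the converse. Given a convex $C$, I would take $C_i:=\pi_i(C)$ to be its projection onto the $i$-th factor and show both that each $C_i$ is convex and that $C=\Box_i C_i$. Convexity of $C_i$ is quick: for $a,b\in C$ and $z_i\in I_{G_i}(a_i,b_i)$, the vertex agreeing with $a$ off coordinate $i$ and equal to $z_i$ there lies in $I_G(a,b)\subseteq C$ by the interval formula, so $z_i\in C_i$.

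The main obstacle is proving $\Box_i C_i\subseteq C$ (the reverse containment being mere projection). The crucial observation, again a consequence of the interval decomposition, is that $C$ is closed under mixing coordinates of its members: if $p,q\in C$ and $r$ is any vertex with $r_i\in\{p_i,q_i\}$ in each coordinate, then $r\in\Box_i I_{G_i}(p_i,q_i)=I_G(p,q)\subseteq C$. Using this, given $x$ with $x_i\in C_i$ for all $i$, I would pick for each $i$ a witness $y^{(i)}\in C$ with $(y^{(i)})_i=x_i$ and build $x$ one coordinate at a time: set $w^{(1)}=y^{(1)}$ and, having produced $w^{(j)}\in C$ whose first $j$ coordinates already equal those of $x$, mix $w^{(j)}$ on coordinates $1,\dots,j$ with $y^{(j+1)}$ on the remaining coordinates to obtain $w^{(j+1)}\in C$ whose first $j+1$ coordinates are correct. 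After $k$ steps $w^{(k)}=x\in C$, which completes the argument.
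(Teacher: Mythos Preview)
Your argument is correct. The distance formula $d_G(u,v)=\sum_i d_{G_i}(u_i,v_i)$ gives the interval decomposition $I_G(u,v)=\Box_i I_{G_i}(u_i,v_i)$, from which both the existence and uniqueness of medians and the ``easy'' direction for convex sets follow immediately. For the converse direction on convex sets, your coordinate-mixing observation (that any vertex $r$ with $r_i\in\{p_i,q_i\}$ lies in $I_G(p,q)$) together with the inductive swapping procedure is a clean and valid way to show $\Box_i\pi_i(C)\subseteq C$.

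There is, however, nothing to compare against: the paper does not prove this lemma at all. It is stated as folklore, with only the parenthetical remark that ``medians of vertices can be seen to correspond to the tuple of the medians in every factor of the product''---which is exactly what you derive from the interval decomposition. So your proof supplies what the paper omits, and the one sentence of intuition the paper does offer is consistent with your approach.
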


There are several characterizations of median graphs: they are exactly the retracts of hypercubes; they can be obtained by successive 
applications of convex amalgamations of proper median subgraphs; they can also be obtained by $K_1$ after a sequence of \emph{convex} or \emph{peripheral expansions}.

A graph $G$ is a \emph{convex amalgam} of
two graphs $G_1$ and $G_2$ (along $G_1\cap G_2$) if $G_1$ and $G_2$ constitute two intersecting induced convex subgraphs of
$G$ whose union is all of $G$.

A (necessarily induced) subgraph $H$ of a graph $G$ is a \emph{retract} of $G$, if there is a map $r:V(G)\rightarrow V(H)$ that maps each edge of $G$ to an edge of $H$,
and fixes $H$, \ie, $r(v) =v$ for every $v\in V(H)$. A \emph{core} is a graph which does not retract to a proper subgraph. 
Any graph is homomorphically equivalent to a unique core. Median graphs are easily seen to be closed under retraction, 
and since they include the $i$-cubes, every retract of a hypercube is a median graph. Actually, the inverse is also true, one of whose corollaries 
is that median graphs are bipartite graphs.

\begin{theorem}\cite{bandelt1984retracts,isbell1980median,van1983matching}\label{retract}
 A graph $G$ is median if and only if it is the retract of a hypercube. Every median graph with more than two vertices is either a Cartesian
product or a convex amalgam of proper median subgraphs.
\end{theorem}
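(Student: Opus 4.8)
The plan is to build everything on the Djoković--Winkler relation $\Theta$ and the convex splits it induces on a median graph, proving the structural dichotomy first and then bootstrapping it to the retract characterisation.

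I would begin by collecting the standard scaffolding. A median graph is bipartite, since a shortest odd cycle would contain three vertices with no median. For an edge $ab$, the half-space $W_{ab}=\{v : d(v,a)<d(v,b)\}$ is convex and, by bipartiteness, $V(G)=W_{ab}\sqcup W_{ba}$. The cut $F$ of all edges with one endpoint in $W_{ab}$ and the other in $W_{ba}$ is a class of the relation $\Theta$, which on a median graph is an equivalence relation on $E(G)$. Writing $\partial_{ab}\subseteq W_{ab}$ and $\partial_{ba}\subseteq W_{ba}$ for the endpoints of $F$, the class $F$ is a perfect matching between them inducing an isomorphism $\partial_{ab}\cong\partial_{ba}$, and both boundaries are convex. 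The convexity of the half-spaces and the transitivity of $\Theta$ are the technical heart here, and I expect them to be where the real work sits; both are classical consequences of the uniqueness of medians together with the Helly property for convex sets.

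For the dichotomy, fix any $\Theta$-class $F$ of a median graph $G$ with $|V(G)|>2$ (one exists since $G$ is connected and has an edge) and keep the notation above. If both sides are \emph{peripheral}, i.e.\ $\partial_{ab}=W_{ab}$ and $\partial_{ba}=W_{ba}$, then the matching $F$ identifies $G$ with $\langle W_{ab}\rangle\,\Box\,K_2$, a Cartesian product of two proper median subgraphs. Otherwise some side, say $W_{ba}$, is not peripheral, and I would set $H_1=\langle W_{ab}\cup\partial_{ba}\rangle$ and $H_2=W_{ba}$. One checks that $H_1=W_{ab}\cup\partial_{ba}$ is already convex, that $H_1\cup H_2=V(G)$ and $H_1\cap H_2=\partial_{ba}\neq\emptyset$, and that both are proper (for $H_1$ this is exactly the non-peripherality of $W_{ba}$), so $G$ is a convex amalgam of the proper convex --- hence median --- subgraphs $H_1$ and $H_2$. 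The delicate points are verifying that $W_{ab}\cup\partial_{ba}$ is convex and that the peripheral case genuinely yields the product, both of which come down to understanding how geodesics cross a single split, controlled by the boundary matching.

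For the retract characterisation, the easy direction is immediate from what has already been set up: $Q_k=\Box_{i=1}^{k}K_2$ is median by Lemma~\ref{cartmed}, and median graphs are closed under retraction, so every retract of a hypercube is median. For the converse I would argue by induction on $|V(G)|$, feeding the dichotomy just proved. The cases $|V(G)|\le 2$ are the cubes $Q_0,Q_1$. If $G=G_1\Box G_2$ is a nontrivial product, then each factor is a convex subgraph of $G$, hence median, and by induction a retract of some cube via $r_1,r_2$; then $r_1\Box r_2$ retracts $Q^{(1)}\Box Q^{(2)}$, again a hypercube, onto $G$. If instead $G$ is a convex amalgam of proper median subgraphs $G_1,G_2$ along $G_0=G_1\cap G_2$, then $G_1,G_2$ are smaller and, by induction, retracts of cubes; it remains to glue these two retractions into a single retraction of a hypercube onto $G$. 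This last amalgamation step is the main obstacle of the whole argument: it is precisely where the convexity of $G_0$ and the gate/Helly structure are needed to check that the map obtained by combining $r_1$ and $r_2$ across the duplicated copy of $G_0$ is well defined and sends edges to edges. The base cases and the product step are routine; the amalgam step carries the genuine difficulty.
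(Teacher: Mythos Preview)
The paper does not supply a proof of this theorem at all: it appears in Section~\ref{sec:prelim}, which explicitly ``present[s] without proofs some well-known basic theory on median graphs,'' and the statement is simply attributed to \cite{bandelt1984retracts,isbell1980median,van1983matching}. So there is no in-paper argument to compare against; any assessment must be of your outline on its own merits.

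Your treatment of the structural dichotomy is the standard one and is essentially fine: pick a $\Theta$-class, and either both half-spaces coincide with their boundaries (giving $G\cong \langle W_{ab}\rangle\,\Box\,K_2$) or one does not (giving a convex amalgam along $\partial_{ba}$). The points you flag as delicate (convexity of $W_{ab}\cup\partial_{ba}$, that the matching really yields the product) are indeed where the work lies, but they are routine once the convexity of half-spaces and $U$-sets is in hand.

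The retract direction, however, has a genuine gap in the amalgam step, and it is not merely a detail you have deferred. From retractions $r_1:Q_{m}\to G_1$ and $r_2:Q_{n}\to G_2$ there is no evident hypercube that maps onto the amalgam $G=G_1\cup_{G_0}G_2$: gluing $Q_m$ and $Q_n$ along preimages of $G_0$ does not produce a hypercube, and ``combining $r_1$ and $r_2$ across the duplicated copy of $G_0$'' is not a construction. The classical proofs in the cited sources do \emph{not} run this induction through the dichotomy. Instead one first obtains the canonical isometric embedding $G\hookrightarrow Q_k$ (with $k$ the number of $\Theta$-classes) and then exhibits a retraction $Q_k\to G$ directly, using that the image of $G$ is closed under the componentwise majority (median) operation of $Q_k$; the retraction is obtained by iterating medians with vertices of $G$, or equivalently via the injective-hull / median-algebra machinery. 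If you want to salvage the inductive scheme, you would need an explicit lemma of the form ``a convex amalgam of two hypercube retracts is again a hypercube retract,'' and proving that lemma essentially forces you back to the median-closure argument anyway.
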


A graph $H$ is \emph{isometrically embeddable} into a graph $G$ if there is a mapping $\phi:V(H)\rightarrow V(G)$ such that $d_G(\phi(u), \phi(v))=d_H(u, v)$ for any vertices $u,v \in H$.
Isometric subgraphs of hypercubes are called \emph{partial cubes}. Retracts of graphs are isometric subgraphs, hence median graphs are partial cubes,
but not every partial cube is a median graph: $C_6$ is an isometric subgraph of $Q_3$, but not a median graph.

A pair $(A,B)$ is a \emph{separation} of $G$ if $A\cup B=V(G)$ and $G$ has no edge between $A\setminus B$ and $B\setminus A$.
Suppose that $(A,B)$ is a separation of $G$, where $A\cap B\neq \emptyset$ and $G[A], G[B]$ are isometric subgraphs of $G$. 
An \emph{expansion} of $G$ with respect to $(A,B)$ is a graph $H$ obtained from $G$ by the following steps:

\begin{enumerate}[(i)]
 \item Replace each $v\in A\cap B$ by vertices $v_1,v_2$ and insert the edge $v_1v_2$.
 \item Insert edges between $v_1$ and all neighbours of $v$ in $A\setminus B$.
       Insert edges between $v_2$ and all neighbours of $v$ in $B\setminus A$.
 \item Insert the edges $v_1u_1$ and $v_2u_2$ if $v,u \in A\cap B$ and $vu \in E(G)$.
\end{enumerate}

An expansion is \emph{convex} if $A\cap B$ is convex in $G$. We can now state Mulder's Convex Expansion Theorem on median graphs.

\begin{theorem}\cite{mulder1978structure,mulder1980interval}
 A graph is median if and only if it can be obtained from $K_1$ by a sequence of convex expansions.
\end{theorem}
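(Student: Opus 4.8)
The plan is to prove both implications separately, with $K_1$ serving as the (trivially median) base case of an induction. Since the sufficiency direction only requires closure of medianness under a single convex expansion, I would first fix a median graph $G$, a separation $(A,B)$ with $A\cap B$ convex and $G[A],G[B]$ isometric, and its expansion $H$, and show $H$ is median. Let $\pi\colon V(H)\to V(G)$ be the contraction sending each split pair $v_1,v_2$ back to $v$ and fixing everything else, and let $\sigma\colon V(H)\to\{1,2\}$ record the side of a vertex (the $A$-side versus the $B$-side). A preliminary observation I would record is that $A$ and $B$ are themselves convex in $G$: a geodesic between two vertices of $B$ that strayed into $A\setminus B$ would have to enter and leave the convex set $A\cap B$, forcing a subgeodesic inside $A\cap B$ to exit it, a contradiction.

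The core of sufficiency is the distance formula
$$d_H(u,w)=d_G(\pi(u),\pi(w))+[\sigma(u)\neq\sigma(w)],$$
where the bracket is $1$ when the two sides differ and $0$ otherwise. I would derive it from the isometry of $G[A],G[B]$ (so same-side distances are unchanged) together with the convexity of $A\cap B$ (so a crossing geodesic meets the separator exactly once, costing a single matching edge). Given this, for any $x,y,z\in V(H)$ I would exhibit the median $\mu$ with $\pi(\mu)=m$, the (unique) median of $\pi(x),\pi(y),\pi(z)$ in $G$, and $\sigma(\mu)$ the majority of $\sigma(x),\sigma(y),\sigma(z)$; the convexity of $A$ and $B$ guarantees $m$ lies on the majority side, so $\mu$ is a genuine vertex of $H$. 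Plugging $\mu$ into the distance formula verifies the three betweenness conditions, while any median of $x,y,z$ in $H$ must project under $\pi$ to $m$ and sit on the majority side, giving uniqueness. Morally $H$ behaves like the product of $G$ with a single $K_2$-coordinate whose median is the majority vote, mirroring the product case of Lemma~\ref{cartmed}.

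For necessity I would induct on $|V(H)|$ and realise the \emph{reverse} of a convex expansion via the Dokovi\'c--Winkler relation. Fixing an edge $ab$, set $W_{ab}=\{v:d_H(v,a)<d_H(v,b)\}$ and $W_{ba}=\{v:d_H(v,b)<d_H(v,a)\}$; since median graphs are bipartite (Theorem~\ref{retract}), these two sets partition $V(H)$. The plan is then to show that $W_{ab}$ and $W_{ba}$ are convex, that the set $F$ of edges with one endpoint in each is a perfect matching between the boundary layers $U_{ab}\subseteq W_{ab}$ and $U_{ba}\subseteq W_{ba}$, and that contracting all edges of $F$ yields a median graph $G$ in which the identified boundary $U:=U_{ab}\cong U_{ba}$ is convex. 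Taking $A$ to be the image of $W_{ab}\cup U_{ba}$ and $B$ the image of $W_{ba}\cup U_{ab}$, one checks that $H$ is precisely the convex expansion of $G$ with respect to $(A,B)$ and that $G$ is median on fewer vertices, so the induction hypothesis applies and closes the argument.

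The main obstacle is entirely in the necessity direction, namely the structural analysis of the Dokovi\'c--Winkler class of $ab$: proving convexity of the halfspaces $W_{ab},W_{ba}$, that $F$ is a matching of their boundaries, and—most delicately—that the separator $U$ stays convex and $G$ stays median after contraction. These are exactly the hypotheses needed to recognise the contraction as the inverse of a \emph{convex} expansion, and establishing them rests on the interplay of geodesics, bipartiteness, and the uniqueness of medians. By contrast, sufficiency reduces to the routine (if slightly tedious) verification of the distance formula and the majority-vote median described above.
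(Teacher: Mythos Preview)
The paper does not prove this theorem. It appears in Section~\ref{sec:prelim}, which the paper explicitly prefaces with ``we present without proofs some well-known basic theory on median graphs,'' and the statement is simply cited to Mulder's original papers. There is therefore no in-paper argument to compare your proposal against.

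For what it is worth, your outline follows the standard route of Mulder's proof: sufficiency via the distance formula $d_H(u,w)=d_G(\pi(u),\pi(w))+[\sigma(u)\neq\sigma(w)]$ and a majority-vote median, and necessity via contracting a $\Theta$-class and verifying that the quotient is median with convex separator. The structural facts you flag as the main obstacle in the necessity direction (convexity of the halfspaces $W_{ab}$, that $F_{ab}$ is a matching and a minimal cut) are precisely those the paper later records, again without proof, as Lemma~\ref{theta}. Your preliminary claim that $A$ and $B$ are themselves convex in $G$ is correct and the sketch you give is sound: a geodesic between two points of $B$ that entered $A\setminus B$ would yield a geodesic subpath between two points of the convex set $A\cap B$ that leaves $A\cap B$.
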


For a connected graph and an edge $ab$ of $G$ we denote
\begin{itemize} 
 \item $W_{ab}=\{v\in V(G) \mid d(v,a)<d(v,b)\},$
 \item $U_{ab}=W_{ab}\cap N_G(W_{ba}).$
\end{itemize}

\noindent
Sets of the graph that are $W_{ab}$ for some edge $ab$ will be called \emph{$W$-sets} and similarly we define \emph{$U$-sets}. If $U_{ab}=W_{ab}$ for some edge $ab$, 
we call the set $U_{ab}$ a peripheral set of the graph.
Note that if $G$ is a bipartite graph, then $V(G)=W_{ab}\cup W_{ba}$ and $W_{ab}\cap W_{ba}=\emptyset$ is true for any edge $ab$.  
If $G$ is a median graph, it is easy to see that $W$-sets and $U$-sets are convex sets of $G$. 
Moreover, the $W$-sets of $G$ play a similar role to that of the halfspaces of the Euclidean spaces, which is highlighted by the following lemma:

\begin{lemma}\label{conv}
 For a median graph, every convex set is an intersection of $W$-sets.
\end{lemma}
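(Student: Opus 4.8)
The plan is to prove the nontrivial inclusion by showing that a convex set $C$ equals the intersection of \emph{all} $W$-sets that contain it, namely $C=\bigcap\{W_{ab}\mid C\subseteq W_{ab}\}$. One inclusion is immediate, since every $W_{ab}$ in this family contains $C$ by definition. For the reverse inclusion it suffices, for each vertex $x\notin C$, to exhibit a single edge $ab$ with $C\subseteq W_{ab}$ but $x\notin W_{ab}$: such a $W_{ab}$ occurs in the intersection and excludes $x$, so no vertex outside $C$ survives.

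First I would fix $x\notin C$ and pick a vertex $g\in C$ minimising $d(x,\cdot)$ over $C$; note $d(x,g)\ge 1$ since $x\notin C$. The heart of the argument is to show that $g$ is a \emph{gate} of $C$ for $x$, \ie that $d(x,c)=d(x,g)+d(g,c)$ for every $c\in C$. This is exactly where I would invoke the median property together with convexity: for $c\in C$ let $m$ be the median of $x,g,c$. Since $m$ lies on a $(g,c)$-geodesic and $C$ is convex, $m\in C$; since $m$ lies on an $(x,g)$-geodesic, $d(x,m)\le d(x,g)$; and since $g$ is a closest point of $C$ to $x$, we get $d(x,m)\ge d(x,g)$. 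Hence $d(x,m)=d(x,g)$, which forces $m=g$, and because $m$ also lies on an $(x,c)$-geodesic this is precisely the gate identity.

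With the gate in hand, the construction of the separating $W$-set is routine bookkeeping. Let $b$ be the neighbour of $g$ on some $(x,g)$-geodesic and set $a=g$, so that $d(x,b)=d(x,g)-1<d(x,a)$ and therefore $x\in W_{ba}$, \ie $x\notin W_{ab}$. It remains to verify $C\subseteq W_{ab}$: for $c\in C$, if we had $d(c,b)\le d(c,g)$, then the triangle inequality and the gate identity would give $d(x,c)\le d(x,b)+d(b,c)\le (d(x,g)-1)+d(c,g)=d(x,c)-1$, a contradiction; hence $d(c,a)=d(c,g)<d(c,b)$ and $c\in W_{ab}$. This yields the desired edge for every $x\notin C$ and completes the proof.

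I expect the gate step to be the only real obstacle, everything afterwards being estimates with the triangle inequality. I would also remark that, since each $W_{ab}$ is itself convex (as already noted in the text) and intersections of convex sets are convex, the representation we build is automatically consistent with $C$ being convex, so the lemma is tight in both directions.
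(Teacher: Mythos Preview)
Your argument is correct. The gate step is indeed the crux, and your use of the median of $x,g,c$ to force $m=g$ is exactly the right move; the remaining inequalities are fine (in fact, since median graphs are bipartite you even get $d(c,b)=d(c,g)-1$ in the contradictory case, giving a discrepancy of $2$ rather than $1$, but your weaker estimate already suffices).

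There is nothing to compare against: in the paper this lemma appears in the preliminaries section as a known fact from the literature on median graphs and is stated without proof. Your proof is essentially the standard one --- first establish that convex sets in a median graph are gated (which you do from scratch), then use the edge at the gate to separate any outside vertex from $C$ by a halfspace. You might note for completeness that the trivial case $C=V(M)$ is handled by the empty intersection convention (or by observing that $V(M)=W_{ab}\cup W_{ba}$ is not itself a problem since every $W$-set contains $C$ vacuously when there are none to intersect).
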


Edges $e=xy$ and $f=uv$ of a graph $G$ are in the \emph{Djokovic-Winkler} relation $\Theta$ \cite{djokovic1973distance,winkler1984isometric} if $d_G(x,u)+d_G(y,v)\neq d_G(x,v)+d_G(y,u)$. 
Relation $\Theta$ is reflexive and symmetric. If $G$ is bipartite, then $\Theta$ can be defined as follows: $e=xy$ and $f=uv$ are in relation $\Theta$ if $d(x,u)=d(y,v)$ and $d(x,v)=d(y,u)$. 
Winkler \cite{winkler1984isometric} proved that on bipartite graphs relation $\Theta$ is transitive if and only if it is a partial cube and so, 
by Theorem \ref{retract} it is an equivalence relation on the edge set of every median graph, whose classes we call $\Theta$-classes. 

The following lemma summarizes some properties of the $\Theta$-classes of a median graph:

\begin{lemma}\cite{imrich2000product}\label{theta}
 Let $G$ be a median graph and for an edge $ab$, let $F_{ab}=F_{ba}$ denote the set of edges between $W_{ab}$ and $W_{ba}$. Then the following are true:
 \begin{enumerate}
  \item $F_{ab}$ is a matching of $G$.
  \item $F_{ab}$ is a minimal cut of $G$.
  \item A set $F\subseteq E(G)$ is a $\Theta$-class of $G$ if and only if $F=F_{ab}$ for some edge $ab\in E(G)$. 
 \end{enumerate}

\end{lemma}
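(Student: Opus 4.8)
The plan is to reduce all three items to the single identity $F_{ab}=[ab]_\Theta$, where $[ab]_\Theta$ denotes the $\Theta$-class of the edge $ab$: this yields item~3 at once, and items~1 and~2 then follow with little extra work. Throughout I would use that a median graph is bipartite and a partial cube, so that $V(G)=W_{ab}\cup W_{ba}$ with $W_{ab}\cap W_{ba}=\emptyset$ for every edge $ab$, that the two distances of a vertex to $a$ and to $b$ differ by exactly $1$, and---by Winkler's theorem, invoked earlier in the excerpt---that $\Theta$ is transitive on $G$, so that two edges lie in the same $\Theta$-class precisely when they satisfy the bipartite $\Theta$-condition directly.

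First I would prove $F_{ab}\subseteq[ab]_\Theta$. Taking an edge $xy$ with $x\in W_{ab}$, $y\in W_{ba}$ and setting $d(x,a)=p$, bipartiteness forces $d(x,b)=p+1$, and using that $xy$ is an edge together with $y\in W_{ba}$, a short case check on the four distances $d(x,a),d(x,b),d(y,a),d(y,b)$ pins them down to $d(x,a)=d(y,b)=p$ and $d(x,b)=d(y,a)=p+1$, which is exactly the bipartite $\Theta$-condition relating $xy$ to $ab$. For the reverse inclusion $[ab]_\Theta\subseteq F_{ab}$, I would start from an edge $xy$ with $d(x,a)=d(y,b)$ and $d(x,b)=d(y,a)$ and, distinguishing whether $x$ is closer to $a$ or to $b$, the same bookkeeping places one endpoint in $W_{ab}$ and the other in $W_{ba}$, so $xy\in F_{ab}$. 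Together these give $F_{ab}=[ab]_\Theta$, and since every $\Theta$-class is $[ab]_\Theta$ for some edge $ab$, this is precisely item~3.

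For item~1 I would show that no vertex is incident with two edges of $F_{ab}$: if $vx,vy$ were two such edges with $x\neq y$, then both are $\Theta$-related to $ab$ and hence, by transitivity, to each other; but $x,y$ are distinct neighbours of $v$, so in the bipartite graph $d(x,y)=2$, and a direct evaluation gives $d(v,v)+d(x,y)=d(v,y)+d(x,v)=2$, so the defining inequality for $\Theta$ fails and $vx,vy$ are not $\Theta$-related, a contradiction. Thus $F_{ab}$ is a matching. For item~2 I would use that $F_{ab}$ is by definition the set of all edges between $W_{ab}$ and $W_{ba}$, which partition $V(G)$ with $a\in W_{ab}$ and $b\in W_{ba}$; hence deleting $F_{ab}$ separates $a$ from $b$, so $F_{ab}$ is a cut, and since $W$-sets are convex and therefore connected, both $G[W_{ab}]$ and $G[W_{ba}]$ are connected, which makes $F_{ab}$ a bond, \ie a minimal cut.

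I expect the main obstacle to be the distance bookkeeping establishing $F_{ab}=[ab]_\Theta$: the inclusions themselves are elementary, but one has to use bipartiteness carefully to rule out the wrong parities and, crucially, to invoke the transitivity of $\Theta$ on partial cubes so that membership in the $\Theta$-class can be tested by the direct two-edge condition rather than through a chain of elementary $\Theta$-relations. Once that identity is in place, the matching and minimal-cut statements are routine consequences of bipartiteness and of the convexity---hence connectedness---of the $W$-sets recorded earlier in the section.
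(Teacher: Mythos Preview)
Your argument is correct and follows the standard route: establishing $F_{ab}=[ab]_\Theta$ via the bipartite distance bookkeeping, then deducing the matching property from the failure of $\Theta$ on incident edges, and the minimal-cut property from the convexity (hence connectedness) of the $W$-sets. Note, however, that the paper does not actually prove this lemma; it is quoted without proof from \cite{imrich2000product}, so there is no in-paper argument to compare against. What you have written is essentially the textbook proof one finds in that reference.
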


An expansion with respect to a separation $(A,B)$ of $G$ is called \emph{peripheral}, if $A\subseteq B$ and $A= A\cap B$ is a convex set of $G$. In other words, if $A$ is a convex set,
the peripheral expansion along $A$ is the graph $H$ obtained by taking the disjoint union of a copy of $G$ and $A$ and joining each vertex in the copy of $A$ to its corresponding vertex of the subgraph $A$ of $G$
in the copy of $G$. Note that in the new graph $H$, the new copy of $A$ is a peripheral set of $H$, hence the name of the expansion. Moreover, during a peripheral expansion of a median graph, 
exactly one new $\Theta$-class appears. Peripheral expansions are enough to get all median graphs.

\begin{theorem}\cite{mulder1990expansion}
 A graph G is a median graph if and only if it can be obtained from $K_1$ by a sequence of peripheral expansions.
\end{theorem}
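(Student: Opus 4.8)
The plan is to prove the two implications separately, leaning on Mulder's Convex Expansion Theorem for the easy direction and on an inductive peripheral contraction for the hard one.

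For the easy direction I would first observe that a peripheral expansion is merely a special convex expansion. Indeed, in a peripheral expansion with respect to $(A,B)$ we have $A\subseteq B$, so $B=V(G)$ and $A\cap B=A$; since $A$ is required to be convex, $G[A]$ and $G[B]=G$ are isometric subgraphs and $A\cap B$ is convex, which is exactly the definition of a convex expansion. Hence any graph obtained from $K_1$ by a sequence of peripheral expansions is in particular obtained by a sequence of convex expansions, and Mulder's Convex Expansion Theorem gives that it is median.

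For the converse I would induct on $|V(G)|$, the base case $K_1$ being immediate. The inductive step rests on one key claim: every median graph $G$ with at least one edge possesses a \emph{peripheral} $\Theta$-class, i.e.\ an edge $ab$ with $W_{ab}=U_{ab}$. Granting this, I pick such a class $F_{ab}$ and contract it, identifying each vertex of $W_{ab}=U_{ab}$ with its $F_{ab}$-partner in $U_{ba}$. By Lemma~\ref{theta} the set $F_{ab}$ is a matching, so the result is exactly the convex subgraph $G[W_{ba}]$, which is median (convex subgraphs of median graphs are median) and has strictly fewer vertices since $a\in W_{ab}\neq\emptyset$. As $U_{ba}$ is a convex $U$-set, $G$ is precisely the peripheral expansion of $G[W_{ba}]$ along $U_{ba}$: the added copy, together with its induced edges and the matching $F_{ab}$, reconstitutes $W_{ab}$. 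Applying the induction hypothesis to $G[W_{ba}]$ and appending this last peripheral expansion expresses $G$ from $K_1$ by peripheral expansions, as desired.

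The main obstacle is the key claim, and here is how I would attack it. Choose a half-space $W_{ab}$ of minimum cardinality among all $W$-sets and argue it is peripheral. Suppose not, so there is a vertex $v\in W_{ab}\setminus U_{ab}$, having no neighbour across $F_{ab}$, whence $d(v,W_{ba})\geq 2$. Using that the convex set $W_{ba}$ is gated in the median graph $G$, let $g$ be the gate of $v$ in $W_{ba}$ and let $u$ be the neighbour of $v$ on a geodesic from $v$ to $g$; since $v\notin U_{ab}$, this $u$ still lies in $W_{ab}$ and $u\neq v$. The gate property gives $d(v,x)=d(v,g)+d(g,x)=1+d(u,g)+d(g,x)\geq 1+d(u,x)$ for every $x\in W_{ba}$, so $d(v,x)=d(u,x)+1$ and hence $W_{vu}\cap W_{ba}=\emptyset$, i.e.\ $W_{vu}\subseteq W_{ab}$. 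As $u\in W_{ab}\setminus W_{vu}$, this inclusion is strict, contradicting the minimality of $|W_{ab}|$; therefore $W_{ab}=U_{ab}$. The points where I expect to spend the most care are verifying that contracting a peripheral class returns exactly $G[W_{ba}]$ with $G$ as its peripheral expansion, and justifying the displayed inequality cleanly — either by invoking gatedness of convex sets as above, or by replacing it with the fact from Lemma~\ref{theta} that $F_{ab}$ is a minimal cut crossed exactly once by every geodesic.
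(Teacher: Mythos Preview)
The paper does not prove this theorem; it is quoted from \cite{mulder1990expansion} without proof, as part of the preliminary background on median graphs. There is therefore no argument in the paper to compare your proposal against.

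That said, your proposed proof is sound. The easy direction is immediate from Mulder's Convex Expansion Theorem (also only cited in the paper). For the converse, your key claim that a minimum-cardinality $W$-set is peripheral goes through: the gate argument yields $d(v,x)\geq 1+d(u,x)$ for every $x\in W_{ba}$, and bipartiteness of median graphs forces equality, whence $W_{ba}\subseteq W_{uv}$ and $W_{vu}\subsetneq W_{ab}$, contradicting minimality. For the contraction step you should record explicitly that $U_{ba}$, being convex in $G$ and contained in the convex (hence isometric) subgraph $G[W_{ba}]$, remains convex there, so the peripheral expansion of $G[W_{ba}]$ along $U_{ba}$ is well-defined and recovers $G$; with that small addition the induction closes.
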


Finally, a family of sets $\FFF$ on a universe $U$ has the \emph{Helly property}, if every finite subfamily of $\FFF$ with pairwise-intersecting sets, has a non-empty total intersection.
A crucial property for our purposes is the following well-known lemma for the convex sets of a median graph.

\begin{lemma}\cite{imrich2000product}\label{helly}
The convex sets of a median graph $G$ have the Helly property. 
\end{lemma}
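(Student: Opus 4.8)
The plan is to prove that the convex sets of a median graph have the Helly property by reducing to the case of $W$-sets, using the structural tools just developed. By Lemma~\ref{conv}, every convex set is an intersection of $W$-sets, so it suffices to understand how $W$-sets intersect. I would first argue that a general Helly statement for an arbitrary pairwise-intersecting family $\{C_\alpha\}$ of convex sets can be reduced to a statement about finitely many $W$-sets: since each $C_\alpha=\bigcap_j W_j^{(\alpha)}$, the total intersection $\bigcap_\alpha C_\alpha$ is itself an intersection of $W$-sets, and it is nonempty precisely when every finite pairwise-intersecting subfamily of the involved $W$-sets has a common point. The heart of the matter is therefore the Helly property for $W$-sets, which I would attack by induction on the number of sets, the base case being three sets.

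The three-set case is exactly where the median axiom should be used directly. Suppose $W_1,W_2,W_3$ are $W$-sets that pairwise intersect; I would pick vertices $u\in W_1\cap W_2$, $v\in W_2\cap W_3$, $w\in W_3\cap W_1$ and consider their median $x$, the unique vertex lying on a $(u,v)$-, $(v,w)$- and $(w,u)$-geodesic simultaneously. The key observation is that each $W$-set is convex, so it contains the relevant intervals: for instance $W_2$ contains both $u$ and $v$, hence $I(u,v)\supseteq\{x\}$ forces $x\in W_2$, and similarly $x\in W_3$ (from $v,w$) and $x\in W_1$ (from $w,u$). Thus $x\in W_1\cap W_2\cap W_3$, establishing the base case. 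I expect this to be the conceptually central step, since it is precisely the translation of the uniqueness-of-median axiom into a Helly-type conclusion.

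For the inductive step, given pairwise-intersecting convex sets $C_1,\dots,C_n$ with $n>3$, I would form the auxiliary convex sets $D_i=C_i\cap C_n$ for $i<n$; each $D_i$ is convex as an intersection of convex sets, and I would verify that the $D_i$ are pairwise intersecting by invoking the already-proven three-set case on the triple $C_i,C_j,C_n$, which yields a common point lying in $D_i\cap D_j$. Applying the inductive hypothesis to the $n-1$ sets $D_1,\dots,D_{n-1}$ produces a point in $\bigcap_{i<n}D_i=\bigcap_{i\le n}C_i$, completing the induction. Finiteness of the family is not actually needed for the finite statement, but the abstract-convexity reduction to $W$-sets handles the passage to arbitrary finite subfamilies cleanly.

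The main obstacle, as I see it, is the reduction step rather than the induction: one must be careful that expressing each $C_\alpha$ as an intersection of $W$-sets and then reshuffling into a statement purely about $W$-sets does not lose the pairwise-intersection hypothesis. The cleanest route is probably to avoid the reduction altogether and run the induction directly on the convex sets themselves, using Lemma~\ref{conv} only implicitly through the fact that intersections of convex sets are convex; the three-vertex median argument then does all the real work and the rest is a standard Helly induction. I would therefore structure the proof so that the median-of-three argument is isolated as the base case and everything else is the routine inductive bookkeeping.
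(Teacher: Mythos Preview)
The paper does not give its own proof of this lemma: it is stated as a known fact with a citation to \cite{imrich2000product} and no argument is supplied. So there is nothing to compare against directly.

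That said, your argument is correct. The median-of-three step is exactly the right idea for the base case, and it works for \emph{any} three pairwise-intersecting convex sets, not just $W$-sets---so your own final remark is on target: the detour through Lemma~\ref{conv} is unnecessary, and the cleanest write-up is the direct induction on convex sets. One small correction in your base case: for $x\in W_1$ you should use that $W_1$ contains $u$ and $w$ and that $x\in I(w,u)$ (you wrote ``from $w,u$'' but had earlier mislabelled which interval puts $x$ in which set; just make sure each set is matched with the pair it actually contains). The inductive step is the standard Helly reduction and is fine as written.
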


\section {Median Decompositions and Medianwidth of Graphs}\label{sec:meddec}

A \emph{tree decomposition} $\DDD$ of a graph $G$ is a pair $(T,\ZZZ)$, where
$T$ is a tree and $\ZZZ=(Z_t)_{t\in V(T)}$ is a family of subsets of $V(G)$
(called bags) such that 
\begin{enumerate}[(i)]
 \item[(T1)] for every edge $uv\in E(G)$ there exists $t \in V(T)$ with $u,v \in Z_t$, 
 \item[(T2)] for every $v\in V(G)$, the set $Z^{-1}(v):=\{t\in V(T) \mid v\in Z_t\}$ is a non-empty connected subgraph (a subtree) of $T$.
\end{enumerate} 
The \emph{width} of a tree decomposition $\DDD=(T,\ZZZ)$ is the number $$\max\{|Z_t|-1 \mid t\in V(T)\}.$$
The \emph{adhesion} of $\DDD$ is the number $$\max\{|Z_t\cap Z_{t'}| \mid tt'\in E(T)\}.$$
Let $\TTT^G$ be the set of all tree decompositions of $G$. The \emph{treewidth} $\tw(G)$ of $G$ is the least width of any tree decomposition of $G$, namely
$$\tw(G):=\min\limits_{\DDD\in \TTT^G}\max\{|Z_t|-1 \mid t\in V(T)\}.$$ 
One can easily check that trees themselves have treewidth $1$.

We assume familiarity with the basic theory of tree decompositions as in \cite{Diestel05} or \cite{reed1997tree}. Let the clique number $\omega(G)$ be the size of the 
largest complete subgraph of $G$. A \emph{(proper) vertex colouring} of a graph $G$ with $k$ colours is a map $c:V(G)\rightarrow \{1,\ldots,k\}$ such that $c(v)\neq c(u)$ whenever $uv\in E(G)$. 
The \emph{chromatic number} $\chi(G)$ is the smallest integer $k$ such that $G$ can be coloured with $k$ colours. 
A graph with $\chi(G)=k$ is called \emph{$k$-chromatic}, while if $\chi(G)\leq k$, we call $G$ \emph{$k$-colourable}.

We say that $H$ is a minor of $G$ and write $H\preceq_m G$, if $H$ can be obtained from $G$ by deleting edges and vertices, and by contracting edges.
In the next lemma, we summarize some of the most important well-known properties of tree decompositions.

\begin{lemma}\label{twprop}
 Let $\DDD=(T,\ZZZ) \in \TTT^G$.
 \begin{enumerate}[(i)]
  \item For every $H\subseteq G$, the pair $(T, (Z_t\cap V(H))_{t\in T})$ is a tree decomposition of $H$, so that $\tw(H)\leq \tw(G)$.
  \item Any complete subgraph of $G$ is contained in some bag of $\DDD$, hence $\omega(G)\leq \tw(G)+1$.
  \item For every edge $t_1t_2$ of $T$, $Z_{t_1}\cap Z_{t_2}$ separates $W_1:=\bigcup_{t\in T_1} Z_t$ from $W_2:=\bigcup_{t\in T_2} Z_t$,
  where $T_1,T_2$ are the components of $T-t_1t_2$, with $t_1 \in T_1$ and $t_2 \in T_2$. 
  \item If $H\preceq_m G$, then $\tw(H)\leq \tw(G)$.
  \item $\chi(G)\leq \tw(G)+1.$
 \end{enumerate}

\end{lemma}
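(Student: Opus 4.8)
The plan is to verify each of (i)--(v) essentially directly from the axioms (T1), (T2) and the definition of width, using at one point the observation that the connected subtrees of a tree are precisely its convex sets, so that the family $\{Z^{-1}(v)\}$ inherits the Helly property of Lemma~\ref{helly}. For (i) I would simply intersect every bag with $V(H)$ and check the axioms survive the restriction: an edge of $H$ is an edge of $G$, and its endpoints lie in $V(H)$, so the bag witnessing (T1) for $G$ still witnesses it after shrinking, giving (T1); for (T2) the index set $\{t\mid v\in Z_t\cap V(H)\}$ equals $Z^{-1}(v)$ for $v\in V(H)$, which is already connected. Since no bag grows, the width does not increase and $\tw(H)\le\tw(G)$.

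For (ii), let $K$ be a complete subgraph. Each $Z^{-1}(v)$ with $v\in V(K)$ is a connected subtree, hence a convex set of the median graph $T$; for each edge $uv\in E(K)$ axiom (T1) furnishes a common node, so these convex sets pairwise intersect. By the Helly property (Lemma~\ref{helly}) they share a node $t$, and then $V(K)\subseteq Z_t$, so $\omega(G)\le\tw(G)+1$.

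For (iii) I would argue that $(W_1,W_2)$ is a separation with $W_1\cap W_2=Z_{t_1}\cap Z_{t_2}$. Every node of $T$ lies in $T_1$ or $T_2$, so $W_1\cup W_2=V(G)$. If an edge $xy$ had $x\in W_1$ and $y\in W_2$, then by (T1) some bag $Z_s$ contains both; whichever side $s$ lies in forces the other endpoint into that same $W_i$, so no edge crosses between $W_1\setminus(Z_{t_1}\cap Z_{t_2})$ and $W_2\setminus(Z_{t_1}\cap Z_{t_2})$; and any $v\in W_1\cap W_2$ has $Z^{-1}(v)$ meeting both $T_1$ and $T_2$, hence crossing the edge $t_1t_2$, so $v\in Z_{t_1}\cap Z_{t_2}$. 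For (iv), monotonicity under deletions is already (i); it remains to treat a single edge contraction and then induct over the minor operations. Contracting $uv$ to $w$, I would replace $u,v$ by $w$ in every bag; the new index set of $w$ is $Z^{-1}(u)\cup Z^{-1}(v)$, which is connected because the two subtrees intersect (by (T1) for $uv$) and the union of two intersecting subtrees is a subtree. No bag grows, so $\tw(H)\le\tw(G)$.

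For (v) I would show that a graph of treewidth $k$ is $k$-degenerate and then colour greedily with $k+1$ colours. Taking a decomposition of width $\tw(G)$ and discarding redundant leaves, a leaf bag $Z_t$ is not contained in its neighbour's bag, so some $v\in Z_t$ lies in no other bag; by (T1) all neighbours of $v$ share a bag with $v$ and thus lie in $Z_t$, whence $\deg(v)\le|Z_t|-1\le\tw(G)$. Deleting $v$ (restricting the decomposition as in (i)) and inducting on $|V(G)|$ gives the colouring. The routine items are (i) and (iv); the one genuinely structural input is the Helly property used in (ii), and the step most prone to error is the separation argument in (iii), where one must correctly track on which side of the edge $t_1t_2$ the shared bag of a hypothetical crossing edge lies.
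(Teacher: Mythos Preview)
Your proof is correct. The paper, however, does not prove Lemma~\ref{twprop} at all: it is presented as a summary of well-known properties of tree decompositions, with the surrounding text explicitly deferring to \cite{Diestel05} and \cite{reed1997tree} for the basic theory. So there is no paper proof to compare against.

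Your arguments are the standard textbook ones. The one stylistic choice worth flagging is your treatment of (ii): you observe that connected subtrees are precisely the convex sets of a tree viewed as a median graph and then invoke Lemma~\ref{helly}. This is thematically apt here, since the paper later proves the median-decomposition analogue (Lemma~\ref{omega<mw}) in exactly that way; but for trees the Helly property of subtrees is of course provable directly in a line without importing median-graph machinery. In (iii) your phrasing is slightly out of order (you use $W_1\cap W_2\subseteq Z_{t_1}\cap Z_{t_2}$ implicitly before stating its proof), but the logic is sound. Parts (iv) and (v) are the usual contraction and degeneracy arguments and are fine as written.
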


In a tree decomposition, every vertex of the graph lives in a connected subtree of the tree. Recall that trees are median graphs.
As we already foreshadowed in Section \ref{sec:intro}, the crucial observation, which (together with the Helly property of the convex sets of median graphs) 
is actually the reason that enables the development of the whole theory in this paper,
is the following: 

\begin{center} 
\emph{A subgraph of a tree is convex if and only if it is connected.} 
\end{center}

\noindent
Inspired by this observation and the general theory on tree decompositions, it is only natural to define this concept 
of decomposition of a graph, not only on trees such that every vertex of the graph lives in a connected subtree, 
but generally on median graphs such that every vertex lives in a convex subgraph of the median graph.

A \emph{median decomposition} $\DDD$ of a graph $G$ is a pair $(M,\XXX)$, where
$M$ is a median graph and $\XXX=(X_a)_{a\in V(M)}$ is a family of subsets of $V(G)$
(called bags) such that 
\begin{itemize}
 \item[(M1)] for every edge $uv\in E(G)$ there exists $a \in V(M)$ with $u,v \in X_a$, 
 \item[(M2)] for every $v\in V(G)$, the set $X^{-1}(v):=\{a\in V(M) \mid v\in X_a\}$ is a non-empty convex subgraph of $M$.
\end{itemize} 
The \emph{width} of a median decomposition $\DDD=(T,\XXX)$ is the number $$\max\{|X_a| \mid a\in V(M)\}.\footnotemark\footnotetext{While the definition of the width of tree decompositions 
is adjusted so that trees are exactly the graphs of treewidth $1$, 
by Theorem \ref{omega=mw} all trianglefree graphs have minimum medianwidth. Since there wouldn't be a similar exact correspondence of graphs of minimum medianwidth to 
the underlying graph class of median decompositions as in the case of treewidth, we felt that such an adjustment is not meaningful for medianwidth.} $$ 

Let $\MMM^G$ be the set of all median decompositions of $G$. The \emph{medianwidth} $\mw(G)$ of $G$ is the least width of any median decomposition of $G$:
$$\mw(G):=\min\limits_{\DDD\in \MMM^G}\max\{|X_a| \mid a\in V(M)\}.$$ 

Since $\TTT^G \subseteq \MMM^G$, by definition of $\mw(G)$ we have $\mw(G)\leq tw(G)+1$. Let us find out which of the properties  of tree decompositions in Lemma \ref{twprop}
can be translated in any sense to properties of median decompositions. For the Lemmata that follow, $\DDD=(T,\XXX) \in \MMM^G$ is a median decomposition of a graph $G$. 
It is straightforward that median decompositions are passed on to subgraphs.

\begin{lemma}\label{submw}
 For every $H\subseteq G$, $(M, (X_a\cap V(H))_{a\in M})$ is a median decomposition of $H$, hence $\mw(H)\leq \mw(G)$.\qed
\end{lemma}

The Helly property of the convex sets of median graphs was the secondary reason that indicated that median decompositions seem to be a natural notion. 
It is what allows us to prove the direct analogue of Lemma \ref{twprop} (ii) (with an actual proof this time around).

\begin{lemma}\label{omega<mw}
 Any complete subgraph of $G$ is contained in some bag of $\DDD$. In particular, $\omega(G)\leq \mw(G)$.
\end{lemma}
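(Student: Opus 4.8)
The plan is to mimic the proof of Lemma \ref{twprop}(ii) for tree decompositions, but to replace the Helly property of subtrees by the Helly property of the convex sets of a median graph (Lemma \ref{helly}). Let $K$ be a complete subgraph of $G$ with vertex set $\{v_1,\dots,v_r\}$. For each $i$, axiom (M2) guarantees that $X^{-1}(v_i)$ is a non-empty convex subset of $V(M)$. It suffices to produce a common vertex $a\in\bigcap_{i=1}^r X^{-1}(v_i)$: any such $a$ satisfies $v_i\in X_a$ for all $i$, hence $K\subseteq X_a$, which is exactly the first assertion.

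First I would check the hypothesis of the Helly property. Since $K$ is complete, for every pair $i\ne j$ the edge $v_iv_j$ lies in $E(G)$, so by axiom (M1) there is a vertex $a\in V(M)$ with $v_i,v_j\in X_a$, that is, $a\in X^{-1}(v_i)\cap X^{-1}(v_j)$. Thus the finite family $\{X^{-1}(v_i)\}_{i=1}^r$ consists of pairwise-intersecting convex sets of $M$. By Lemma \ref{helly} these convex sets have the Helly property, so $\bigcap_{i=1}^r X^{-1}(v_i)\ne\emptyset$; picking any vertex $a$ in this intersection gives a bag with $X_a\supseteq K$.

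For the second assertion I would apply the above to a clique $K$ of size $\omega(G)$. Since $K$ is contained in some bag $X_a$, the width $\max_{a\in V(M)}|X_a|$ of $\DDD$ is at least $|X_a|\ge\omega(G)$. As $\DDD$ was an arbitrary median decomposition, taking the minimum over all of $\MMM^G$ yields $\mw(G)\ge\omega(G)$.

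The argument is routine once the right tool is isolated; the only genuine step is recognizing that the Helly property is precisely what upgrades the pairwise intersections supplied by (M1) to a non-empty common intersection. This is the exact analogue of the subtree-intersection argument underlying ordinary tree decompositions, so I do not anticipate any further obstacle.
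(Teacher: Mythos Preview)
Your proof is correct and follows essentially the same argument as the paper: use (M1) to get pairwise intersections of the sets $X^{-1}(v)$ for $v\in V(K)$, use (M2) to ensure these sets are convex, and then apply the Helly property of convex sets in median graphs (Lemma~\ref{helly}) to obtain a common node whose bag contains $K$. The paper's proof is virtually identical, only omitting the explicit derivation of the inequality $\omega(G)\leq\mw(G)$, which you spell out.
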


\begin{proof}
 Let $K$ be a complete subgraph of $G$. By (M1), for every $u,v\in V(K)$, there exists a bag of $M$ that contains both $u$ and $v$, so that $X^{-1}(u)\cap X^{-1}(v)\neq \emptyset$.
 By (M2), the family $\FFF=\{X^{-1}(v) \mid v\in V(K)\}$ is a family of pairwise-intersecting convex sets of the median graph $M$. By Lemma \ref{helly}, 
 $$\bigcap\FFF=\bigcap_{v\in V(K)}X^{-1}(v)\neq \emptyset$$ and hence, there is a bag of $M$ that contains all vertices of $K$.
\end{proof}

For a median decomposition $(M,\XXX)$ and a minimal cut $F\subseteq E(M)$ of $M$ that separates $V(M)$ into $W_1$ and $W_2$, let $U_i$ be the vertices of $W_i$ adjacent to edges of $F$,
and let $Y_i:= \bigcup_{x\in W_i}X_x$, $Z_i:= \bigcup_{x\in U_i}X_x$, where $i=1,2$. 
Observe that minimal cuts on a tree are just single edges by themselves.
This leads us to an analogue of Lemma \ref{twprop}(iii), which says that minimal cuts of $M$ correspond to separations of $G$.

\begin{lemma}\label{mincut}
 For every minimal cut $F$ of $M$ and $Y_i, Z_i$, $i=1,2$, defined as above, $Z_1\cap Z_2$ separates $Y_1$ from $Y_2$.  
\end{lemma}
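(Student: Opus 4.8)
The plan is to prove that the pair $(Y_1,Y_2)$ is a separation of $G$ whose separator is exactly $Z_1\cap Z_2$; this is precisely the assertion that $Z_1\cap Z_2$ separates $Y_1$ from $Y_2$, mirroring the tree-decomposition case of Lemma~\ref{twprop}(iii). Two easy reductions come first. Since $F$ is a minimal cut whose removal splits $M$ into $W_1$ and $W_2$, every edge of $M$ with one endpoint in $W_1$ and the other in $W_2$ belongs to $F$, and by the definition of $U_i$ these are exactly the edges joining $U_1$ to $U_2$ (this is the matching of Lemma~\ref{theta}). Moreover $Y_1\cup Y_2=V(G)$, because by (M2) each $v\in V(G)$ lies in some bag $X_a$, and $a$ belongs to $W_1$ or to $W_2$. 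Finally, as $U_i\subseteq W_i$ we trivially have $Z_i\subseteq Y_i$, so $Z_1\cap Z_2\subseteq Y_1\cap Y_2$.

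The crux is the reverse inclusion $Y_1\cap Y_2\subseteq Z_1\cap Z_2$, and this is where the median structure and (M2) are genuinely used. Let $v\in Y_1\cap Y_2$. Then $X^{-1}(v)$ contains a vertex $x\in W_1$ and a vertex $y\in W_2$. By (M2) the set $X^{-1}(v)$ is convex in $M$, hence connected, so there is an $(x,y)$-path inside $M[X^{-1}(v)]$. Since this path starts in $W_1$ and ends in $W_2$, it must traverse some edge $x'y'$ with $x'\in W_1$ and $y'\in W_2$; by the first paragraph this edge lies in $F$, whence $x'\in U_1$ and $y'\in U_2$. As $x',y'\in X^{-1}(v)$, we have $v\in X_{x'}\subseteq Z_1$ and $v\in X_{y'}\subseteq Z_2$, so $v\in Z_1\cap Z_2$. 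Combining the two inclusions gives $Y_1\cap Y_2=Z_1\cap Z_2$.

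It remains to check that $(Y_1,Y_2)$ has no edge across the separator, i.e.\ no edge of $G$ joins $Y_1\setminus Y_2$ to $Y_2\setminus Y_1$. Suppose $uv\in E(G)$. By (M1) there is a bag $X_a$ with $u,v\in X_a$; this $a$ lies in $W_1$ or in $W_2$, so $u$ and $v$ both lie in $Y_1$ or both in $Y_2$. Hence no edge can have one endpoint in $Y_1\setminus Y_2$ and the other in $Y_2\setminus Y_1$. Together with $Y_1\cup Y_2=V(G)$ and $Y_1\cap Y_2=Z_1\cap Z_2$, this shows $(Y_1,Y_2)$ is a separation of $G$ with separator $Z_1\cap Z_2$, completing the proof. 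I expect the only nontrivial point to be the key inclusion of the second paragraph: the whole weight of the argument rests on the fact that, by (M2), $X^{-1}(v)$ is connected and therefore cannot jump between the two sides of the cut without actually using a cut edge, whose endpoints are forced into $U_1$ and $U_2$; the rest is bookkeeping from (M1) and the definitions.
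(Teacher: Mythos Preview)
Your proof is correct and follows essentially the same approach as the paper: show $Y_1\cap Y_2\subseteq Z_1\cap Z_2$ via convexity (you use connectedness of $X^{-1}(v)$ to find a path crossing $F$, the paper uses $I(a,b)\subseteq X^{-1}(v)$ to the same effect), and rule out edges across the separator via (M1). The parenthetical appeal to Lemma~\ref{theta} is slightly off, since that lemma concerns $\Theta$-classes rather than arbitrary minimal cuts, but you do not actually rely on it, so this is harmless.
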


\begin{proof}
 Let $v \in Y_1\cap Y_2$. Then there are $a\in W_1, b \in W_2$, such that $v\in X_{a}\cap X_{b}$, \ie $a,b\in X^{-1}(v)$. 
 By the convexity of $X^{-1}(v)$, it must be $I(a,b)\subseteq X^{-1}(v)$. 
 But $F$ is a minimal cut between $W_1$ and $W_2$, therefore there is an $xy \in F$ with $x\in W_1,y\in W_2$, such that $x,y \in X^{-1}(v)$, so that $v\in X_x\cap X_y\subseteq Z_1\cap Z_2$. 
 This proves that $Y_1\cap Y_2 \subseteq Z_1\cap Z_2$. 
 
 It remains to show, that there is no edge $u_1u_2$ of $G$ with $u_1 \in Y_1\setminus Y_2$ and $u_2 \in Y_2\setminus Y_1$. 
 If $u_1u_2$ was such an edge, then by (M1) there is an $x\in V(M)$ with $u_1,u_2 \in X_x$, 
 hence $x \in X^{-1}(u_1)\cap X^{-1}(u_2)\subseteq (Y_1\setminus Y_2) \cap (Y_2\setminus Y_1)=\emptyset,$ a contradiction.
\end{proof}

Recall that by Lemma \ref{theta}, for an edge $ab$ of $M$, the $\Theta$-class $F_{ab}$ is a minimal cut of $M$. Denote $Y_{ab}:= \bigcup_{x\in W_{ab}}X_x$ and $Z_{ab}:= \bigcup_{x\in U_{ab}}X_x$. 
We will refer to them as the \emph{$Y$-sets} and \emph{$Z$-sets} of a median decomposition $\DDD$.
Note that the $Y$-sets and $Z$-sets are subsets of the decomposed graph $G$, while the $W$-sets and $U$-sets are subsets of the median graph $M$ of the decomposition.
Observe that a more special way to look at the edges of a tree is that each edge of a tree forms a degenerated $\Theta$-class by itself and its two corresponding $U$-sets are the ends of the edge. 
As a special case of Lemma \ref{mincut}, we obtain a more specific analogue of Lemma \ref{twprop}(iii),
which says that intersections of unions of bags across opposite sides of a whole $\Theta$-class of $M$ also correspond to separations of $G$.

\begin{lemma}\label{sep}
 For every edge $ab$ of $M$, $Z_{ab}\cap Z_{ba}$ separates $Y_{ab}$ from $Y_{ba}$.\qed
\end{lemma}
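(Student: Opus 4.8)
The plan is to obtain this statement as an immediate specialization of Lemma~\ref{mincut}, applied to the particular minimal cut given by a single $\Theta$-class, so that no fresh separation argument is required.

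First I would invoke Lemma~\ref{theta}(2), which guarantees that for any edge $ab$ of $M$ the $\Theta$-class $F_{ab}$ is a minimal cut of $M$. Since $M$ is median and hence bipartite, $V(M)=W_{ab}\cup W_{ba}$ is a disjoint partition, and $F_{ab}=F_{ba}$ is by definition exactly the set of edges of $M$ running between $W_{ab}$ and $W_{ba}$. Thus $F_{ab}$ is a minimal cut separating $V(M)$ into the two sides $W_1:=W_{ab}$ and $W_2:=W_{ba}$, which places us precisely in the setting of Lemma~\ref{mincut} with $F:=F_{ab}$.

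Next I would verify that the data associated with $F_{ab}$ matches the data in Lemma~\ref{mincut}. There, $U_i$ consists of the vertices of $W_i$ incident with an edge of the cut; since the cut edges are exactly $F_{ab}$, a vertex of $W_{ab}$ is incident with such an edge if and only if it has a neighbour in $W_{ba}$, \ie if and only if it lies in $W_{ab}\cap N_M(W_{ba})=U_{ab}$. Hence $U_1=U_{ab}$ and, symmetrically, $U_2=U_{ba}$. Consequently the unions $Y_1=\bigcup_{x\in W_{ab}}X_x$ and $Z_1=\bigcup_{x\in U_{ab}}X_x$ coincide with $Y_{ab}$ and $Z_{ab}$ respectively, and likewise $Y_2=Y_{ba}$ and $Z_2=Z_{ba}$.

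With the identifications in place, Lemma~\ref{mincut} immediately yields that $Z_1\cap Z_2=Z_{ab}\cap Z_{ba}$ separates $Y_1=Y_{ab}$ from $Y_2=Y_{ba}$ in $G$, which is exactly the claim. The only point requiring care is the bookkeeping of the previous paragraph, namely confirming that the abstract cut-sides $U_i$ are genuinely the $U$-sets $U_{ab},U_{ba}$ defined via $\Theta$-classes; but this is purely a matter of unwinding definitions and the bipartiteness of $M$, and so presents no real obstacle.
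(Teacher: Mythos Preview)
Your proposal is correct and follows exactly the paper's own approach: the paper states Lemma~\ref{sep} as an immediate special case of Lemma~\ref{mincut}, using Lemma~\ref{theta} to identify the $\Theta$-class $F_{ab}$ as a minimal cut of $M$, and accordingly gives no further proof beyond the \qed. Your careful unwinding of the identifications $U_i=U_{ab},U_{ba}$ and $Y_i,Z_i$ with $Y_{ab},Y_{ba},Z_{ab},Z_{ba}$ is precisely the bookkeeping the paper leaves implicit.
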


%%%%%%%%%%%%%%%%%%%%% Fig %%%%%%%%%%%%%%%%%%%%%%%

\begin{figure}[t]
 \centering

\begin{tikzpicture}[
  dot/.style={draw,fill=black,circle,inner sep=0.5pt}
  ]

 \foreach \i in {1,...,4} {
    \node[dot,xshift=-100,yshift=-10,label=\i*90:\tiny{$\i$}] (\i) at (\i*90:1) {};
}

\draw (1) -- (2);
\draw (2) -- (3);
\draw (3) -- (4);
\draw (4) -- (1);
 
\node[dot,fill=white] (12) at (0,0) {12};
\node[dot,fill=white] (14) at (1,0) {14};
\node[dot,fill=white] (34) at (1,-1) {34};
\node[dot,fill=white] (23) at (0,-1) {23};

\draw (12) -- (23);
\draw (23) -- (34);
\draw (34) -- (14);
\draw (14) -- (12);

\end{tikzpicture}
\caption{A median decomposition of $C_4$ of width $2$.}
\label{fig:c4}
\end{figure}
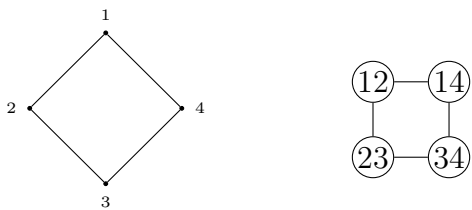

%%%%%%%%%%%%%%%%%%%%%%%%%%%%%%%%%%%%%%5

While the first three properties of Lemma \ref{twprop} can be translated into the setting of median decompositions, it is not the case that $\mw(H)\leq \mw(G)$, whenever $H \preceq_m G$.
The median decomposition of $C_4$ in Fig.~\ref{fig:c4}, shows that $\mw(C_4)\leq 2$, while (by Lemma~\ref{omega<mw}) $\mw(C_3)\geq \omega(C_3)=3$ and $C_3\preceq_m C_4$. 
An insight to why medianwidth is not a minor-closed parameter, is that 
while the union of two intersecting connected subsets of a tree is again a connected subset 
(which allows you to safely replace in the bags of a tree decomposition both vertices of a contracted edge of the original graph with the new vertex obtained by the contraction without hurting (T2)
and get a tree decomposition of the contracted graph with at most the same width), it is not true in general that the union of two intersecting convex sets of a median graph is again convex. 

The \emph{simplex graph} $\kappa(G)$ of $G$, is the graph with vertex set the set of complete subgraphs of $G$, 
where two vertices of $\kappa(G)$ are adjacent if the corresponding cliques differ by exactly one vertex of $G$. It is well-known that $\kappa(G)$ is a median graph \cite{bandelt1989embedding,barthelemy1986use}.

We have seen that $\omega(G)\leq \mw(G)\leq \tw(G)+1$ and that medianwidth isn't a minor-closed parameter. 
It is natural to ask if medianwidth is related to other non-minorclosed graph parameters between the clique number and the treewidth. In general, $\mw(G)<\tw(G)+1$,
so one immediate candidate is the clique number itself. By Lemma \ref{twprop}(v), and for reasons that will become apparent in Section \ref{sec:chi}, 
the chromatic number $\chi(G)$ is the other candidate that we thought of. $C_5$ and Fig.~\ref{fig:c5} show that the medianwidth and the chromatic number are not equivalent,
but in Section \ref{sec:chi} we will still attempt to compare the two parameters. 

As indicated by the simplex graph, it turns out that clique number is indeed the correct answer. 
While one might be able to argue by considering $\kappa(G)$, we will adopt a different approach for the proof,
which we believe that highlights that the directions we consider in Section \ref{sec:i-mw} are natural for the development of this theory.

%%%%%%%%%%%%%%%%%%%% Fig %%%%%%%%%%%%%%%%%%%%%%%%%

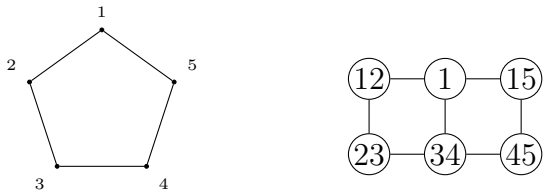
\begin{figure}[t]
 \centering
 
\begin{tikzpicture}[
  dot/.style={draw,fill=black,circle,inner sep=0.5pt}
  ]

 \foreach \i in {1,...,5} {
    \node[dot,xshift=-100,yshift=-10,label=18+\i*72:\tiny{$\i$}] (\i) at (18+\i*72:1) {};
}

\draw (1) -- (2);
\draw (2) -- (3);
\draw (3) -- (4);
\draw (4) -- (5);
\draw (5) -- (1);
 
\node[dot,fill=white] (12) at (0,0) {12};
\node[draw,circle,inner sep=2pt,fill=white] (11) at (1,0) {1};
\node[dot,fill=white] (15) at (2,0) {15};
\node[dot,fill=white] (45) at (2,-1) {45};
\node[dot,fill=white] (34) at (1,-1) {34};
\node[dot,fill=white] (23) at (0,-1) {23};

\draw (12) -- (11);
\draw (11) -- (15);
\draw (15) -- (45);
\draw (45) -- (34);
\draw (34) -- (23);
\draw (12) -- (23);
\draw (11) -- (34);

\end{tikzpicture}
\caption{$\mw(C_5)=2$, while $\chi(C_5)=3$.}
\label{fig:c5}
\end{figure}

%%%%%%%%%%%%%%%%%%%%%%%%%%%%%%%%%%%%%%%%%%%%%%%%%%%%%%%%%%%%

\begin{theorem}\label{omega=mw}
 For any graph $G$, $\mw(G)=\omega(G).$
\end{theorem}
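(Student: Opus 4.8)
The plan is to prove the two inequalities separately. The inequality $\omega(G)\le\mw(G)$ is already supplied by Lemma~\ref{omega<mw}, so the whole task reduces to exhibiting a median decomposition of $G$ of width at most $\omega(G)$, which yields $\mw(G)\le\omega(G)$.

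The engine of the construction is the observation that Cartesian products turn tree decompositions into median decompositions. Concretely, given tree decompositions $\DDD_1=(T_1,\ZZZ^1),\dots,\DDD_k=(T_k,\ZZZ^k)$ of $G$, I would set $M:=T_1\Box\cdots\Box T_k$ and, for a node $a=(t_1,\dots,t_k)\in V(M)$, define the bag $X_a:=\bigcap_{j=1}^k Z^j_{t_j}$. Since trees are median, $M$ is median by Lemma~\ref{cartmed}. I would then verify (M1) and (M2): for (M1), any edge $uv\in E(G)$ lies in some bag of each $\DDD_j$, so the node carrying those coordinates contains both $u$ and $v$; for (M2), one notes that $X^{-1}(v)=\prod_{j}(Z^j)^{-1}(v)$ is a product of subtrees, hence non-empty and convex in $M$ exactly by the description of the convex sets of a Cartesian product in Lemma~\ref{cartmed}. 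This step is the conceptual heart of the argument and the place where the median (rather than merely tree) structure is genuinely used; everything else is bookkeeping.

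It then remains to choose the factors so that the product bags collapse to cliques. For each non-edge $\{u,v\}$ of $G$ I would build a tree decomposition $\DDD_{uv}$ on a single edge $t_1t_2$ with bags $Z_{t_1}=V(G)\setminus\{v\}$ and $Z_{t_2}=V(G)\setminus\{u\}$. Since $\{u,v\}\notin E(G)$, every edge of $G$ avoids at least one of $u,v$ and hence lies in $Z_{t_1}$ or $Z_{t_2}$, so (T1) holds; each vertex occupies one or both nodes, so (T2) holds; and by construction $u$ and $v$ never share a bag of $\DDD_{uv}$. Taking the product over all non-edges, every bag $X_a$ of the resulting median decomposition omits an endpoint of each non-edge, so $X_a$ induces a complete subgraph and $|X_a|\le\omega(G)$. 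Thus $\mw(G)\le\omega(G)$, and together with Lemma~\ref{omega<mw} this gives $\mw(G)=\omega(G)$.

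I expect the main obstacle to be the verification of (M2) for the product decomposition: one must ensure that the preimage of a vertex is convex in $M$, which fails for arbitrary unions of convex sets but holds here precisely because $X^{-1}(v)$ is a Cartesian product of convex sets, making Lemma~\ref{cartmed} indispensable. A secondary point worth stating cleanly is that the widths of the individual factors $\DDD_{uv}$ are irrelevant---only the size of the intersected bags in the product matters---which is exactly the phenomenon that the $i$-medianwidth parameters of Section~\ref{sec:i-mw} are designed to capture.
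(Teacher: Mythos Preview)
Your proof is correct, and at its core it is the same construction the paper uses: for each non-edge $\{u,v\}$ one takes a product with $K_2$, placing $V(G)\setminus\{v\}$ on one side and $V(G)\setminus\{u\}$ on the other, so that after processing all non-edges every bag induces a clique. The paper, however, packages this as an extremal argument rather than an explicit construction: it lets $\beta(\DDD)$ count the non-edges that still share a bag, takes a median decomposition minimising $\beta$, and shows that if $\beta>0$ one can pass to $M\Box K_2$ with the two bag families $X_a\setminus\{v\}$ and $X_a\setminus\{u\}$ to decrease $\beta$, a contradiction. Your version builds the final object directly as a Cartesian product of two-node tree decompositions and invokes Lemma~\ref{cartmed} for (M2); this is a bit more explicit and in fact anticipates the ``product of tree decompositions'' viewpoint that the paper develops only later in Theorem~\ref{imwchar}. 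One small edge case to state: when $G$ is complete the index set of non-edges is empty, and you should fall back on the trivial one-bag decomposition, which already has width $\omega(G)$.
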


\begin{proof}
 By Lemma \ref{omega<mw}, it is enough to show $\mw(G)\leq \omega(G)$. For a median decomposition $\DDD=(M,\XXX)$, let $\beta(\DDD)$ be the number of non-edges of $G$ contained in a bag of $\DDD$,
 namely $$\beta(\DDD):=\big|\big\{\{v,u\}\mid uv\notin E(G) \text{ and } X^{-1}(v)\cap X^{-1}(u)\neq \emptyset\big\}\big|.$$ 
 Let $\DDD_0=(M,\XXX)\in \MMM^G$ with $\beta(\DDD_0)$ minimum.
 We will prove that $\beta(\DDD_0)=0$ and therefore, every bag of $\DDD_0$ will induce a clique in $G$. Then by Lemma~\ref{omega<mw} the Theorem will follow.
 
 Suppose that $\beta(\DDD_0)>0$. Then there exists a node $a_0\in V(M)$ and two vertices in $v,u\in X_{a_0}$, such that $vu\notin E(G)$. Consider the decomposition $\DDD'=(M',\XXX')$ of $G$, 
 where:
 \begin{itemize}
  \item $M'=M\Box K_2$ is the median graph obtained by the peripheral expansion of $M$ on itself, where $V(M')=M_1\cup M_2$ and $M_1, M_2$ induce isomorphic copies of $M$. 
  Let $a_1,a_2$, be the copies of $a\in V(M)$ in $M_1,M_2$ respectively.
  \item For every $a\in V(M)$, $X'_{a_1}:=X_a\setminus \{v\}$, $X'_{a_2}:=X_a\setminus \{u\}.$
 \end{itemize}
It is straightforward to check that $\DDD'$ is a valid median decomposition of $G$, where every bag of $\DDD_0$ has been duplicated, but $u$ lives only in $M_1$ and $v$ only in $M_2$. 
Clearly, in $\DDD'$ we have that $X'^{-1}(v)\cap X'^{-1}(u)=\emptyset$, hence $\beta(\DDD')=\beta(\DDD_0)-1$, a contradiction.
\end{proof}

\section{Medianwidth vs Chromatic Number}\label{sec:chi}

As we discussed in the previous section, the chromatic number was another promising candidate, which we thought we could compare with medianwidth.
Even though the standard medianwidth is equivalent to the clique number of a graph, 
the following construction gives us an indication that suitable variations of medianwidth can become equivalent to the chromatic number. 

A $k$-dimensional lattice graph $L$ is a graph obtained by the Cartesian Product of $k$ paths. By Lemma \ref{cartmed}, lattice graphs are median graphs. 
For a $k$-colourable graph $G$, let $c:V(G)\rightarrow \{1,\ldots,k\}$ be a proper colouring of $G$ and for $i=1,\ldots,k$, let $P_i$ be a path with $|c^{-1}(i)|$ many vertices, 
whose vertices are labeled by the vertices of $c^{-1}(i)$ with arbitrary order. Consider the $k$-dimensional lattice graph $L=\Box_{i=1}^{k}P_i$, 
whose vertices $\bold{a}=(v_1,\ldots,v_k)\in V(L)$ are labeled by the $k$-tuple of labels of $v_1,\ldots,v_k$. 
For a vertex $\bold{a}\in V(L)$, define $X_\bold{a}$ to be the set of vertices that constitute the $k$-tuple of labels of $\bold{a}$. Let $\XXX=(X_\bold{a})_{\bold{a}\in V(L)}$.

\begin{lemma}\label{chromdec}
 The pair $\DDD=(L,\XXX)$ is a median decomposition of $G$ of width $k$.
\end{lemma}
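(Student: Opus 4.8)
The plan is to verify directly that $L$ is median, that every bag has size exactly $k$, and then to check axioms (M1) and (M2), with essentially all of the content residing in a clean description of the sets $X^{-1}(v)$ so that Lemma \ref{cartmed} can be applied.

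First, since $L=\Box_{i=1}^{k}P_i$ is a Cartesian product of paths and paths are median, Lemma \ref{cartmed} immediately gives that $L$ is a median graph. For the width, I would observe that each lattice vertex $\mathbf{a}=(v_1,\ldots,v_k)$ contributes to $X_{\mathbf{a}}$ exactly one label $v_i$ from each coordinate, and that these $k$ labels lie in the distinct colour classes $c^{-1}(1),\ldots,c^{-1}(k)$; hence they are pairwise distinct and $|X_{\mathbf{a}}|=k$ for every $\mathbf{a}$. (Here I am using, as is implicit in the construction, that every colour class is nonempty, so that each factor $P_i$ has at least one vertex.) Thus the width is $k$.

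For (M1), take an edge $uv\in E(G)$. Since $c$ is proper, $c(u)=i\neq j=c(v)$ for some indices $i,j$. Then $u$ is a vertex of $P_i$ and $v$ a vertex of $P_j$, so any lattice point whose $i$-th coordinate is $u$, whose $j$-th coordinate is $v$, and whose remaining coordinates are arbitrary vertices of the corresponding (nonempty) paths is an $\mathbf{a}$ with $u,v\in X_{\mathbf{a}}$. The main point of the proof is (M2), and it rests on the elementary but crucial observation that a vertex $v$ of colour $i$ can occupy only the $i$-th coordinate of a tuple, because every other coordinate $j$ holds a vertex of the different colour $j$. Concretely, I would show that if $c(v)=i$ then
\[
X^{-1}(v)=\{(v_1,\ldots,v_k)\in V(L)\mid v_i=v\}=C_1\Box\cdots\Box C_k,
\]
where $C_i=\{v\}$ and $C_j=V(P_j)$ for $j\neq i$. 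Each $C_j$ is convex in $P_j$: a whole path is convex in itself, and a single vertex is trivially convex. By Lemma \ref{cartmed}, which describes the convex sets of a Cartesian product of median graphs as exactly such products of convex factors, $X^{-1}(v)$ is convex in $L$; it is nonempty because every $P_j$ is. This establishes (M2) and completes the proof.

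There is no real difficulty beyond the bookkeeping in (M2); the only subtlety worth flagging is the nonemptiness of all colour classes (equivalently, of all factor paths), which is precisely what guarantees both that every bag attains full size $k$ and that the product slices used in (M1) and (M2) are nonempty. Should some colour be unused, one simply discards the corresponding trivial factor and runs the identical argument with fewer colours.
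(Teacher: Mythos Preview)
Your proof is correct and follows essentially the same approach as the paper: both verify (M1) via the fact that the endpoints of an edge lie in different colour classes and hence in different coordinates of some lattice point, and both handle (M2) by identifying $X^{-1}(v)$ as the product slice $\big(\Box_{j\neq i}P_j\big)\Box\{v\}$ and invoking Lemma~\ref{cartmed}. You supply slightly more detail (explicitly noting that $L$ is median, justifying $|X_{\mathbf a}|=k$, and flagging the nonemptiness of colour classes), but the argument is the same.
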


\begin{proof} 
 Since every colour class $c^{-1}(i)$ is an independent set and since the bags of $\XXX$ are all the transversals of the colour classes, every edge of $G$ is contained in a bag, so that (M1) holds. 
 To see (M2), as $c$ defines a partition of $V(G)$, every vertex of $G$ will be a label in some $k$-tuple labeling a vertex of $L$, which means that there is a bag in $\XXX$ containing it.
 Let $v\in V(G)$ be the label of $x_v\in P_i$. Then $$X^{-1}(v)= \big(\Box_{j\neq i} P_j\big)\Box \{x_v\},$$ which, by Lemma \ref{cartmed}, is a convex subgraph of $L$. 
\end{proof}

We will refer to median decompositions obtained from a colouring of $V(G)$ as in Lemma~\ref{chromdec} as \emph{chromatic median decompositions}. 
Fig.~\ref{k34} shows a chromatic decomposition of a bipartite graph.
In an attempt to add some intuition to chromatic median decompositions (if needed), borrowing terminology from geometry and without elaborating more on this, 
in a chromatic median decomposition we make every vertex $v\in V(G)$ live in its own hyperplane of the lattice, a maximal sublattice of the lattice of codimension 1, which is of course convex.

%%%%%%%%%%%%%%%%%%%%%% Fig %%%%%%%%%%%%%%%%%%%%%%%

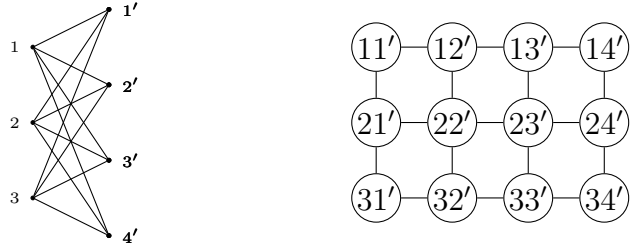
\begin{figure}
 \centering

\begin{tikzpicture}[
  dot/.style={draw,fill=black,circle,inner sep=0.5pt}
  ]

 \foreach \i in {1,2,3} {
     \node[dot,xshift=-100,yshift=-10,label=left:\tiny{$\i$}] (a\i) at (0,-\i) {};
       \foreach \j in {1,2,3,4} { 
       \node[dot,xshift=-100,yshift=-10,label=right:\tiny{$\j'$}] (b\j) at (1,0.5-\j) {};
       \draw (a\i) -- (b\j);
       
       \node[dot,yshift=-10,fill=white] (\i\j) at (\j,-\i) {$\i\j'$};

       }
       }
  
\foreach \i in {1,2,3} {
\draw (\i1) -- (\i2);
       \draw (\i2) -- (\i3);
       \draw (\i3) -- (\i4);
       }

\foreach \j in {1,2,3,4} {
   \draw (1\j) -- (2\j);
       \draw (2\j) -- (3\j);
       }

\end{tikzpicture}
\caption{A chromatic median decomposition of $K_{3,4}.$}
\label{k34}
\end{figure}

%%%%%%%%%%%%%%%%%%%%%%%%%%%%%%%%%%%%%%%%%%%%%%%%%%%%%%%%%%%%%%%%%%%%%%%%%%%%%%%%%%%%

As one can observe, chromatic median decompositions enjoy more regularity than general ones. 
One would hope that by adding in the definition of median decompositions a suitable third axiom to exploit this regularity, 
and which axiom would automatically hold for chromatic median decompositions, we would be able to make the respective variation of medianwidth equivalent to the chromatic number. 

It is well-known that a graph of treewidth~$k$ has a
tree decomposition $(T,\ZZZ)$ of width~$k$ such that for every
$st\in E(T)$ we have $|Z_s\setminus Z_t| = |Z_t\setminus Z_s| = 1$. We call such
decompositions \emph{smooth}. Recall the definition of the $Z$-sets of a median decomposition. Similarly to tree decompositions, we define a median decomposition $(M,\XXX)$ to be \emph{$\Theta$-smooth}, 
if for every $ab\in E(M)$, we have $|Z_{ab}\setminus Z_{ba}|=|Z_{ba}\setminus Z_{ab}|=1$ and additionally, $X^{-1}(v_a)\cup X^{-1}(v_b)$ is convex in $M$,
where $\{v_a\}=Z_{ab}\setminus Z_{ba}$, $\{v_b\}=Z_{ba}\setminus Z_{ab}$. 
Notice that since the $\Theta$-classes of a tree are single edges, smoothness and $\Theta$-smoothness coincide on tree decompositions.

We consider the following third axiom in the definition of median decompositions:
\begin{itemize}
 \item[](M3) $\DDD$ is $\Theta$-smooth.
\end{itemize}
The \emph{smooth-medianwidth} $\smw(G)$ of $G$ is the minimum width over all median decompositions of $G$ that additionally satisfy (M3).

\begin{lemma}\label{smooth}
 For any graph $G$, $\chi(G)\leq\smw(G)$.
\end{lemma}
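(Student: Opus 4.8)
The plan is to use the extra structure of a $\Theta$-smooth decomposition to read off a proper colouring of $G$ whose number of colours is bounded by the width. Fix a $\Theta$-smooth median decomposition $\DDD=(M,\XXX)$ of $G$ of width $k=\smw(G)$. For every $\Theta$-class $F=F_{ab}$ of $M$, smoothness provides the special pair $\{v_a\}=Z_{ab}\setminus Z_{ba}$, $\{v_b\}=Z_{ba}\setminus Z_{ab}$, and the colouring will be obtained by declaring $v_a$ and $v_b$ to receive the same colour, for every $\Theta$-class. Formally, I would let $\approx$ be the equivalence relation on $V(G)$ generated by all such pairs and take the colour classes of $G$ to be the equivalence classes of $\approx$. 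This mirrors a chromatic median decomposition, where two vertices sharing a colour are exactly the consecutive labels of a single path-factor.

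The first, routine, step controls one special pair. I claim $X^{-1}(v_a)\subseteq W_{ab}$ and $X^{-1}(v_b)\subseteq W_{ba}$. Indeed $v_a\in Z_{ab}$ forces $X^{-1}(v_a)$ to meet $U_{ab}$; if it also met $W_{ba}$, then by convexity of $X^{-1}(v_a)$ the interval between a point of $U_{ab}$ and a point of $W_{ba}$ would have to cross the minimal cut $F$ and hence meet $U_{ba}$, giving $v_a\in Z_{ba}$, a contradiction. Since $W_{ab}\cap W_{ba}=\emptyset$, the two hyperplanes $X^{-1}(v_a)$ and $X^{-1}(v_b)$ are disjoint; and since $v_a\in Y_{ab}\setminus Y_{ba}$ and $v_b\in Y_{ba}\setminus Y_{ab}$, Lemma~\ref{sep} shows $v_av_b\notin E(G)$. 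Thus every generating pair of $\approx$ is a non-edge whose two hyperplanes sit on opposite sides of a single $\Theta$-class, and the second clause of the smoothness axiom adds that $X^{-1}(v_a)\cup X^{-1}(v_b)$ is convex, so a merged pair occupies two adjacent parallel slabs across $F$.

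It then remains to upgrade these local facts to two global statements about a whole colour class $C$: (i) the hyperplanes $\{X^{-1}(u)\}_{u\in C}$ are pairwise disjoint, and (ii) they cover $V(M)$. Granting (i) and (ii), the family $\{X^{-1}(u)\}_{u\in C}$ partitions $V(M)$ for each class $C$, so every bag $X_a$ contains exactly one vertex of each class; hence all bags have the same size, equal to the number of classes, which is therefore at most $k$. Statement (i) makes the colouring proper: any edge $uu'$ of $G$ lies in a common bag by (M1), so $X^{-1}(u)\cap X^{-1}(u')\neq\emptyset$, which by (i) is impossible once $u\approx u'$. Combining the two, $\chi(G)\le k=\smw(G)$.

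The main obstacle is exactly (i) and (ii): passing from the pairwise, single-$\Theta$-class picture to the assertion that all hyperplanes of a fixed class tile $M$ like the parallel fibres of one path-factor in a chromatic decomposition. The merges inside $C$ come from pairwise distinct $\Theta$-classes (a vertex is special on only one side of any one $\Theta$-class), so they organise $C$ into a connected skeleton; the plan is to combine the union-convexity clause of $\Theta$-smoothness with the Helly property (Lemma~\ref{helly}) and the representation of convex sets as intersections of $W$-sets (Lemma~\ref{conv}) to show that consecutive slabs glue into an increasing convex region and that no two hyperplanes of $C$ ever land on the same side of a separating $\Theta$-class. Making this propagation watertight—in particular ruling out that a class is ``absent'' at some node of $M$—is the delicate point, since the smoothness axiom only constrains the boundary layers $U_{ab}$ directly, rather than individual bags.
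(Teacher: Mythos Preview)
Your approach differs from the paper's and is left genuinely incomplete at the decisive steps. You correctly isolate the special pairs $\{v_a,v_b\}$ and correctly argue, for a single $\Theta$-class, that $X^{-1}(v_a)$ and $X^{-1}(v_b)$ are disjoint and that $v_av_b\notin E(G)$. But you never prove claims (i) and (ii); you only sketch a plan (``combine the union-convexity clause with Helly and Lemma~\ref{conv}'') and explicitly acknowledge that making the propagation watertight is ``the delicate point''. The obstacle you name is real: $\Theta$-smoothness constrains the $Z$-sets across a $\Theta$-class, not individual bags, so there is no immediate reason why the hyperplanes of a full $\approx$-class should tile $V(M)$, nor why two vertices linked through a chain of special pairs across \emph{different} $\Theta$-classes should have disjoint hyperplanes. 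As written, the argument stops exactly where the content lies.

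The paper sidesteps the global tiling problem by induction on $|V(G)|$. It picks a \emph{peripheral} set $P=U_{ab}$ of $M$, uses Lemma~\ref{sep} to see that $v_av_b\notin E(G)$, identifies $v_a$ and $v_b$ into a single vertex $v$ to form $G'$, deletes $P$ to obtain $M'=M[W_{ba}]$, and verifies (using the convexity of $X^{-1}(v_a)\cup X^{-1}(v_b)$ to handle (M1) for the merged vertex) that the resulting $(M',\XXX')$ is again a $\Theta$-smooth median decomposition of $G'$ of width at most $k$. By induction $G'$ is $k$-colourable, and the colouring lifts to $G$ by assigning $v_a$ and $v_b$ the colour of $v$. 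Your equivalence relation $\approx$ is precisely the relation implicitly produced by this sequence of identifications, but the inductive framing reduces everything to a single $\Theta$-class at a time, so no ``propagation'' argument is needed; the peripheral-expansion structure of median graphs does the work that your unproven tiling claim would have had to do.
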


\begin{proof}
 Let $\smw(G)\leq k$. Consider a $\Theta$-smooth median decomposition $(M,\XXX)$ of $G$ of width at most $k$ and $P=U_{ab}$ a peripheral set of $M$. 
 Like in the definition of $\Theta$-smoothness, let $v_a$ the single element of $Z_{ab}\setminus Z_{ba}$ and $v_b$ the single element of $Z_{ba}\setminus Z_{ab}$. 
 By Lemma \ref{sep}, $Z_{ab}\cap Z_{ba}$ separates $v_a$ and $v_b$, hence they are not adjacent in $G$. 
 Notice that since $P$ is peripheral, all neighbours of $v_a$ in $G$ are contained in $Z_{ab}\cap Z_{ba}$.
 Let $G'$ be the graph obtained by $G$ by identiying $v_a$ and $v_b$ into one new vertex $v$. 
 Then, by letting $M'=M\setminus P=M[W_{ba}]$ and replacing $v_b$ with $v$ in every bag of $X^{-1}(v_b)$ (which remains convex in $M'$), we obtain a decomposition $(M',\XXX')$ of $G'$ of width as most $k$,
 for which (M2) is immediately passed onto.
 
 To see (M1), notice that $N_{G'}(v)=N_G(v_a)\cup N_G(v_b)$. By the convexity of $X^{-1}(v_a)\cup X^{-1}(v_b)$ in $M$, 
 $X^{-1}(v_a)\cap U_{ab}$ and $X^{-1}(v_b)\cap U_{ba}$ are joined by a perfect matching $F\subseteq F_{ab}$ in $M$,
 hence $v_b$ is also contained in a common bag with every neighbour of $v_a$. This means that $v$ is contained in a common bag with everyone of its neighbours in $G'$, 
 hence $(M',\XXX')$ is a valid smooth median decomposition of $G'$. 
 
 By induction on the number of vertices of a graph with smooth-medianwidth at most $k$,
 $G'$ is $k$-colourable. Let $c'$ be a $k$-colouring of $G'$. Since $v_a$, $v_b$ are not adjacent in $G$, by letting $c(v_a)=c(v_b)=c'(v)$ and $c(u)=c'(u)$ for every $u\in V(G)\setminus \{v_a,v_b\}$,  
 we obtain a proper $k$-colouring $c$ of $G$. The Lemma follows.
\end{proof}

Lastly, by the way they are defined, chromatic median decompositions are $\Theta$-smooth, hence $\smw(G)\leq \chi(G)$. 
An immediate corollary of this observation and Lemma \ref{smooth} is the following characterization of the chromatic number.

\begin{theorem}\label{chichar}
 For any graph $G$, $\smw(G)=\chi(G).$\qed
\end{theorem}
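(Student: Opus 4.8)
The plan is to prove the two inequalities $\chi(G) \le \smw(G)$ and $\smw(G) \le \chi(G)$ separately. The first is already supplied by Lemma~\ref{smooth}, so the only remaining content is the reverse inequality. For this I would take an optimal proper colouring of $G$ with exactly $\chi(G)$ colours, form the associated chromatic median decomposition $\DDD = (L,\XXX)$ as in Lemma~\ref{chromdec}, and verify that it satisfies axiom (M3), i.e.\ that it is $\Theta$-smooth. Since Lemma~\ref{chromdec} already guarantees that $\DDD$ is a valid median decomposition of width exactly $\chi(G)$, establishing $\Theta$-smoothness would immediately give $\smw(G) \le \chi(G)$, and combining with Lemma~\ref{smooth} finishes the proof.

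The heart of the argument is the analysis of the $\Theta$-classes of the lattice $L = \Box_{i=1}^{k} P_i$, where $k = \chi(G)$. In a Cartesian product of paths, each $\Theta$-class is carried by a single factor: fixing a path $P_i$ and an edge $x_j x_{j+1}$ of $P_i$, the corresponding class $F_{ab}$ consists of all edges of $L$ projecting to $x_j x_{j+1}$ in the $i$-th coordinate, and it separates $L$ into the halves where the $i$-th coordinate lies in $\{x_1,\dots,x_j\}$ and in $\{x_{j+1},\dots\}$ respectively. I would identify the associated $U$-sets as the two hyperplanes with $i$-th coordinate $x_j$ and $x_{j+1}$, and then read off the $Z$-sets: each is the union of the relevant bags, which works out to $Z_{ab} = \{v_j\} \cup \bigcup_{l \ne i} c^{-1}(l)$ and $Z_{ba} = \{v_{j+1}\} \cup \bigcup_{l \ne i} c^{-1}(l)$, where $v_j, v_{j+1}$ are the labels of $x_j, x_{j+1}$.

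From this computation the first clause of $\Theta$-smoothness is immediate: $Z_{ab}\setminus Z_{ba} = \{v_j\}$ and $Z_{ba}\setminus Z_{ab}=\{v_{j+1}\}$ are both singletons, so $v_a = v_j$ and $v_b = v_{j+1}$. For the convexity requirement, I would note that $X^{-1}(v_a)$ and $X^{-1}(v_b)$ are the two parallel hyperplanes $\big(\Box_{l\ne i}P_l\big)\Box\{x_j\}$ and $\big(\Box_{l\ne i}P_l\big)\Box\{x_{j+1}\}$, so their union is $\big(\Box_{l\ne i}P_l\big)\Box\{x_j,x_{j+1}\}$; since $\{x_j,x_{j+1}\}$ is an edge and hence a convex subset of the path $P_i$, Lemma~\ref{cartmed} guarantees this union is convex in $L$. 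This verifies (M3).

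The main obstacle, such as it is, lies entirely in pinning down the $\Theta$-classes of the product of paths and the exact form of the corresponding $Z$-sets; once the book-keeping of which labels appear in the boundary bags is done correctly, both clauses of $\Theta$-smoothness fall out with no further work. I would be careful to handle the degenerate situation in which some colour class $c^{-1}(i)$ is a singleton, so that $P_i = K_1$ contributes no $\Theta$-class, and to confirm that the width remains exactly $k = \chi(G)$, so that the bound obtained is genuinely $\smw(G) \le \chi(G)$ rather than something weaker.
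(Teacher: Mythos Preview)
Your proposal is correct and follows exactly the approach of the paper: the paper proves $\chi(G)\le\smw(G)$ via Lemma~\ref{smooth} and then states without further detail that ``by the way they are defined, chromatic median decompositions are $\Theta$-smooth, hence $\smw(G)\le\chi(G)$''. Your argument simply supplies the verification of that claim---identifying the $\Theta$-classes of $L=\Box_{i=1}^k P_i$ with the edges of the factors, computing the $Z$-sets, and invoking Lemma~\ref{cartmed} for the convexity of $X^{-1}(v_a)\cup X^{-1}(v_b)$---so the two proofs coincide.
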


\section{The i-Medianwidth of Graphs}\label{sec:i-mw}

For the proof of Theorem \ref{omega=mw}, we promised an approach that indicates which directions we can consider to develop this theory. 
We believe this is the case, because the proof makes apparent the fact that in order to find a median decomposition of width equal to the clique number, 
our underying median graph of the decomposition might need to contain hypercubes of arbitrarily large dimension as induced subgraphs or, 
more generally, it might need to contain Cartesian products of arbitrarily many factors. 
There are many notions of dimension for median graphs (or, more generally, partial cubes) in the literature \cite{ovchinnikov2011graphs,Eppstein2005585}.
The one most suitable for our purposes is the \emph{tree dimension} of a graph $G$, the minimum $k$ such that $G$ has an isometric embedding into a Cartesian product of $k$ trees.
The graphs with finite tree dimension are just the partial cubes \cite{ovchinnikov2011graphs}, hence every median graph has finite tree dimension.
Since trees are exactly the median graphs of tree dimension $1$, we are led to the following definition.

Fon an $i\geq 1$, an \emph{$i$-median decomposition} of $G$ is a median decomposition $\DDD=(M,\XXX)$ satisfying (M1),(M2), where $M$ is a median graph of tree dimension at most $i$.
We denote the set of $i$-median decompositions of $G$ as $\MMM^G_i$. The \emph{$i$-medianwidth} $\mw_i(G)$ of $G$ is the least width of any $i$-median decomposition of $G$:
$$\mw_i(G):=\min\limits_{\DDD\in \MMM^G_i}\max\{|X_a| \mid a\in V(M)\}.$$ 
The $1$-median decompositions are the tree decompositions of $G$, therefore $\mw_1(G)=\tw(G)+1$. By definition, the invariants $\mw_i$ form a non-increasing sequence:
$$\tw(G)+1=\mw_1(G)\geq \mw_2(G)\geq \dots \geq \mw(G)=\omega(G).$$

\noindent
An immediate observation is that $i$-medianwidth is not a bounded parameter on all graphs. 
Furthermore, we would like that $i$-medianwidth and $i'$-medianwidth for different $i,i'\geq 1$ do not constitute the same parameters, 
so that the hierarchy above is one that makes sense. In fact, we will see that complete multipartite graphs establish this in a notably strong fashion:
for $i<i'$, a class of graphs of bounded $i'$-medianwidth can have unbounded $i$-medianwidth.

For a Cartesian product of trees $H=\Box_{j=1}^kT^j$, let $\pi_j:\Box_{j=1}^kT^j\rightarrow T^j$ be the $j$-th projection of $H$ to its $j$-th factor $T^j$.
We can always embed a median graph into a Cartesian product of trees
that isn't unnecessarily large.

\begin{lemma}\label{optemb}
 Let $k$ be the tree dimension of a median graph $M$. Then there is an isometric embedding $\phi$ of $M$ 
 into the Cartesian product of $k$ trees $\Box_{j=1}^kT^j$ such that for every $j=1,\ldots,k$ and every $t^j\in E(T^j)$,
 $$\pi_j^{-1}(t^j)\cap \phi(V(M))\neq\emptyset.$$
\end{lemma}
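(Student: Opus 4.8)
The plan is to start from any isometric embedding witnessing the tree dimension and then ``trim'' each tree to the smallest subtree that the embedding actually meets. Concretely, since $k$ is the tree dimension of $M$, fix an isometric embedding $\psi:V(M)\to V(\Box_{j=1}^kT^j)$ into a Cartesian product of $k$ trees. For each $j$, let $S_j:=\pi_j(\psi(V(M)))\subseteq V(T^j)$ and let $T^j_0$ be the minimal subtree of $T^j$ spanning $S_j$, i.e.\ the union of all $(p,q)$-paths with $p,q\in S_j$ (equivalently, the smallest connected subgraph containing $S_j$). Since every $\pi_j\psi(v)$ lies in $S_j\subseteq V(T^j_0)$, the map $\psi$ in fact lands in $\Box_{j=1}^kT^j_0$, and I would take $\phi:=\psi$ with this smaller codomain.

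First I would check that $\phi$ is still isometric. Each $T^j_0$ is a connected subgraph of the tree $T^j$, hence convex, hence an isometric subgraph, so $d_{T^j_0}(p,q)=d_{T^j}(p,q)$ for all $p,q\in V(T^j_0)$. Because distances in a Cartesian product are the sum of the factor distances, for all $u,v\in V(M)$ we get $d_{\Box T^j_0}(\phi u,\phi v)=\sum_j d_{T^j_0}(\pi_j\phi u,\pi_j\phi v)=\sum_j d_{T^j}(\pi_j\psi u,\pi_j\psi v)=d_{\Box T^j}(\psi u,\psi v)=d_M(u,v)$, so $\phi$ is an isometric embedding into $\Box_{j=1}^kT^j_0$.

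The heart of the argument is the following crossing claim: for every $j$ and every edge $t^j=pq$ of $T^j_0$, some edge of $\phi(M)$ projects onto $t^j$, which is exactly the assertion $\pi_j^{-1}(t^j)\cap\phi(V(M))\neq\emptyset$. Since $t^j$ is an edge of the spanning subtree $T^j_0$, removing it from $T^j$ splits $V(T^j)$ into components $C_p\ni p$ and $C_q\ni q$, and $S_j$ meets both of them: otherwise $t^j$ would lie on no path between two points of $S_j$ and hence would not belong to $T^j_0$. Pick $u,w\in V(M)$ with $\pi_j\phi(u)\in C_p$ and $\pi_j\phi(w)\in C_q$. As $M$ is connected, take a $u$--$w$ path in $M$; its $\phi$-image is a path in the product along which consecutive vertices differ in a single coordinate by one tree-edge, so the $j$-th coordinates trace a walk in $T^j$ from $C_p$ to $C_q$. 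Since $t^j$ is a bridge separating $C_p$ from $C_q$, this walk must traverse $t^j$, i.e.\ some consecutive pair of image vertices has $j$-th coordinates $p$ and $q$; the corresponding edge of $\phi(M)$ projects onto $t^j$, proving the claim for every edge of every $T^j_0$.

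Finally I would rule out degenerate factors so as to confirm that we still have exactly $k$ trees. If some $T^j_0$ were a single vertex, then $\pi_j\circ\phi$ would be constant and $\phi$ would give an isometric embedding of $M$ into $\Box_{j'\neq j}T^{j'}_0$, a product of $k-1$ trees, contradicting that the tree dimension of $M$ equals $k$; hence every $T^j_0$ is a nontrivial tree and $\phi:M\to\Box_{j=1}^kT^j_0$ is the desired embedding. The only genuinely delicate point is the crossing claim, where connectivity of $M$ together with the bridge structure of trees forces an edge across $t^j$; the rest is bookkeeping about product distances and isometric (convex) subtrees, with the minimality of $k$ entering only at the very end to keep all $k$ factors nontrivial.
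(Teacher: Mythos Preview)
Your proof is correct and follows essentially the same idea as the paper's: shrink the tree factors so that no part of any factor is unused, and then invoke connectedness of $\phi(M)$ to obtain the crossing property. The paper packages this more tersely by choosing an embedding into $H=\Box_{j=1}^kT^j$ with $|V(H)|$ minimal, observing that then every leaf fibre $\pi_j^{-1}(l^j)$ must meet $\phi(V(M))$ (otherwise delete $l^j$ and contradict minimality), and concluding from connectedness of $\phi(M)$; your explicit trimming to the spanning subtrees $T^j_0$ is the constructive version of the same extremal step, and your final ``no trivial factor'' check is implicit in the paper via the hypothesis that $k$ is the exact tree dimension.
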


\begin{proof}
 Let $\phi:M\rightarrow H=\Box_{j=1}^kT^j$ be an isometric embedding into the Cartesian product of $k$ trees $H$ with $V(H)$ minimal. 
 Then, for every $j=1,\ldots,k$ and every leaf $l^j\in V(T^j)$ it must be $\pi_j^{-1}(l^j)\cap \phi(V(M))\neq\emptyset$,
 otherwise we can embed $M$ into $(\Box_{h\neq j}T^h)\Box (T^j-l^j)$, a contradiction to the choice of $H$.
 Since $\phi(M)$ is a connected subgraph of $H$, the Lemma follows.
\end{proof}

We say that two $\Theta$-classes $F_{x_1x_2}, F_{x'_1x'_2}$ of a median graph $M$ \emph{cross} if $W_{x_ix_{3-i}}\cap W_{x'_jx'_{3-j}}\neq \emptyset$ for any $i,j=1,2$. 
Otherwise, if there is a choice $i,j \in \{1,2\}$ such that $W_{x_ix_{3-i}}\subseteq W_{x'_jx'_{3-j}}$ and $W_{x_{3-i}x_i}\subseteq W_{x'_{3-j}x'_j}$, we call $F_{x_1x_2}, F_{x'_1x'_2}$ \emph{laminar}.
Two $U$-sets are laminar if their adjacent $\Theta$-classes are laminar.

For a median graph $M$, let $\varTheta^M$ be the set of its $\Theta$-classes, $\UUU^M$ the family of its $U$-sets
and $\PPP^M$ the family of its peripheral sets.

A \emph{$\Theta$-system} of $M$  is a set of $\Theta$-classes of it. We call a $\Theta$-system of $M$ a \emph{direction} in $M$ if all of its members are pairwise laminar. 
In \cite{bandelt1989embedding}, Bandelt and Van De Vel show that a median graph is isometrically embeddable into the Cartesian product of $k$
trees if and only if $\Theta^M$ can be ``covered'' with $k$ directions. We will extensively use the one implication of the above result, 
which we reformulate (together with some facts obtained from its proof) in a more convenient way for what follows. For a mapping $\psi:G\rightarrow H$ and an edge $e\in E(H)$,
by $\psi^{-1}(e)$ we mean $\{uv\in E(G)\mid \psi_j(u)\psi_j(v)=e\}.$

\begin{lemma}\cite{bandelt1989embedding}\label{embedding}
 Let $\phi:M \rightarrow H$ be an isometric embedding of a median graph $M$ into the Cartesian product of $k$ trees $H=\Box_{j=1}^kT^j$ as in Lemma~\ref{optemb}. 
 Then for every $j=1,\ldots,k$ the following are true:
 \begin{enumerate}[(i)]
  \item for every $e^j\in E(T^j)$,  $\phi^{-1}(\pi_j^{-1}(e^j))$ is a $\Theta$-class of $M$
  \item the family $\Delta_j=\{\phi^{-1}(\pi_j^{-1}(e^j))\mid e^j\in E(T^j)\}$ is a direction of $M$
  \item for every node $t^j$ adjacent to an edge $e^j$ in $T^j$, one of the two $U$-sets of $M$ adjacent to 
  $\phi^{-1}(\pi_j^{-1}(e^j))$ is a subset of $\phi^{-1}(\pi_j^{-1}(t^j))$.
 \end{enumerate}
\end{lemma}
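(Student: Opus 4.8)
The plan is to transfer the entire $\Theta$-structure from the product $H=\Box_{j=1}^k T^j$ down to its isometric subgraph $M$, and to read off all three claims from two standard facts about partial cubes. The first fact is that the $\Theta$-classes of $H$ are exactly the fibres $\pi_j^{-1}(e^j)$, for $j=1,\dots,k$ and $e^j\in E(T^j)$: each tree $T^j$ has its single edges as $\Theta$-classes, and in a Cartesian product two edges are $\Theta$-related precisely when they lie in a common factor and project to $\Theta$-related edges there. The second fact is that, since $\phi$ is isometric and both $M$ and $H$ are partial cubes, the distance formulation of $\Theta$ on bipartite graphs (given just before Lemma~\ref{theta}) yields $e\,\Theta_M f$ iff $\phi(e)\,\Theta_H\phi(f)$ for all $e,f\in E(M)$; as $\Theta$ is transitive on partial cubes, the $\Theta$-classes of $M$ are exactly the nonempty restrictions to $E(M)$ of the $\Theta$-classes of $H$. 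I will also use the product distance formula $d_H(\phi(u),\phi(v))=\sum_{h} d_{T^h}(\pi_h(\phi(u)),\pi_h(\phi(v)))$, which forces every $H$-geodesic to project to a geodesic in each factor.

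For (i), I first show that $\phi^{-1}(\pi_j^{-1}(e^j))$ is nonempty for every edge $e^j=t^js^j$ of $T^j$. By the optimality of $\phi$ (Lemma~\ref{optemb}) both node-fibres $\pi_j^{-1}(t^j)$ and $\pi_j^{-1}(s^j)$ meet $\phi(V(M))$, so I pick $u,v\in V(M)$ with $\pi_j(\phi(u))=t^j$ and $\pi_j(\phi(v))=s^j$. A $u$--$v$ geodesic of $M$ maps, by isometry, to an $H$-geodesic whose $j$-th projection is a geodesic of $T^j$ from $t^j$ to $s^j$, \ie the single edge $e^j$; the corresponding step of that geodesic is an edge of $M$ in $\phi^{-1}(\pi_j^{-1}(e^j))$. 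Since $\pi_j^{-1}(e^j)$ is a $\Theta$-class of $H$, the second fact identifies its nonempty restriction $\phi^{-1}(\pi_j^{-1}(e^j))$ with a single $\Theta$-class of $M$, which is (i).

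For (ii) and (iii) I make the half-space structure explicit. Fixing an edge $ab\in\phi^{-1}(\pi_j^{-1}(e^j))$ with $\pi_j(\phi(a))=t^j$, the distance formula gives $d_M(v,a)<d_M(v,b)$ iff $\pi_j(\phi(v))$ lies in the component $A$ of $T^j-e^j$ containing $t^j$; hence $W_{ab}=\phi^{-1}(\pi_j^{-1}(A))$ and $W_{ba}=\phi^{-1}(\pi_j^{-1}(A'))$, where $A'$ is the other component. Thus the vertex-bipartition of $M$ induced by each class of $\Delta_j$ is the $\phi$-pullback of an edge-cut bipartition of the tree $T^j$. Edge-cuts of a tree do not cross, so the pulled-back bipartitions in $\Delta_j$ do not cross either, and by definition the classes of $\Delta_j$ are pairwise laminar, which is (ii). For (iii), Lemma~\ref{theta} tells us that the only edges of $M$ between $W_{ab}$ and $W_{ba}$ are those of the class $\phi^{-1}(\pi_j^{-1}(e^j))$, each projecting onto $e^j$; so every endpoint of such an edge lying in $W_{ab}$ has $j$-th coordinate $t^j$, whence the $U$-set $U_{ab}=W_{ab}\cap N(W_{ba})$ is contained in the layer $\phi^{-1}(\pi_j^{-1}(t^j))$, as claimed.

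The conceptual heart is the clean transfer of $\Theta$-classes from $H$ to $M$ (the second fact) together with the nonemptiness of every fibre, which is exactly what the optimal embedding of Lemma~\ref{optemb} was engineered to guarantee; once the half-spaces $W_{ab}$ are recognised as pullbacks of tree-component bipartitions, (ii) and (iii) become bookkeeping. The main point demanding care is the $U$-set computation in (iii): one must invoke that a $\Theta$-class is the full minimal cut between its two sides (Lemma~\ref{theta}), so that no other edges of $M$ cross between $W_{ab}$ and $W_{ba}$ and corrupt the coordinate bookkeeping. Since the underlying embedding theorem is due to Bandelt and Van De Vel~\cite{bandelt1989embedding}, this plan is really a reconstruction of their result in the present notation rather than a genuinely new argument.
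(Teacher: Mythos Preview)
The paper does not supply its own proof of this lemma: it is explicitly presented as a reformulation of a result (and of facts from its proof) due to Bandelt and Van De Vel~\cite{bandelt1989embedding}, so there is nothing to compare your argument against on the paper's side.

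Your reconstruction is correct and follows the natural line one extracts from that reference: the $\Theta$-classes of $H=\Box_j T^j$ are precisely the fibres $\pi_j^{-1}(e^j)$, isometry transfers the bipartite $\Theta$-relation verbatim so that the $\Theta$-classes of $M$ are the nonempty restrictions of those of $H$, and the optimality of the embedding (Lemma~\ref{optemb}, whose statement should read $t^j\in V(T^j)$ rather than $E(T^j)$) guarantees nonemptiness of every such restriction. Your half-space computation $W_{ab}=\phi^{-1}(\pi_j^{-1}(A))$ is the right way to see (ii) and (iii) simultaneously, and your use of Lemma~\ref{theta} to ensure that no stray edges cross between $W_{ab}$ and $W_{ba}$ is exactly the care point needed for (iii). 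As you yourself note, this is a reconstruction rather than a new argument.
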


\noindent
We say that a set of vertices $S\subseteq V(G)$ \emph{intersects} a subgraph $H$ of a graph $G$ if it contains a vertex of $H$. We need the following lemma:

\begin{lemma}\cite{mulder1990expansion}\label{hull}
 Let $S$ be a set of vertices intersecting every peripheral set of a median graph $M$. Then $<S>=V(M)$.
\end{lemma}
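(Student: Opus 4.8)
The plan is to recast the statement in terms of half-spaces and then isolate a single structural fact about peripheral sets. First I would invoke Lemma~\ref{conv}: since $<S>$ is the smallest convex set containing $S$ and every $W$-set is convex, $<S>$ is precisely the intersection of all $W$-sets that contain $S$. Hence $<S>=V(M)$ holds if and only if no proper $W$-set contains $S$. Because $M$ is bipartite, $V(M)=W_{ab}\cup W_{ba}$ with $W_{ab}\cap W_{ba}=\emptyset$ for every edge $ab$, so ``$S\not\subseteq W_{ab}$'' is the same as ``$S\cap W_{ba}\neq\emptyset$''. Therefore $<S>=V(M)$ is equivalent to the assertion that $S$ meets every proper $W$-set of $M$. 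Since by hypothesis $S$ meets every peripheral set, the entire lemma reduces to the following \emph{Claim: every proper $W$-set of $M$ contains a peripheral set.} Granting the Claim, any proper $W_{ab}$ contains a peripheral set $P$, so $S$ meets $P$ by hypothesis and hence meets $W_{ab}$; as $ab$ was arbitrary, $S$ meets every proper $W$-set and we are done.

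To prove the Claim I would show that a $W$-set which is minimal under inclusion among all proper $W$-sets is automatically peripheral. Given a proper $W_{ab}$, the family of proper $W$-sets contained in $W_{ab}$ is finite and nonempty (it contains $W_{ab}$ itself), so it has a minimal element $W_{cd}\subseteq W_{ab}$; it then suffices to show that such a minimal $W_{cd}$ satisfies $U_{cd}=W_{cd}$. Suppose not, and pick $v\in W_{cd}\setminus U_{cd}$; then $v$ has no neighbour in $W_{dc}$, so $d(v,W_{dc})\geq 2$. Let $g$ be the gate of $v$ in the convex set $W_{dc}$ (convex sets of a median graph are gated), and let $y$ be a neighbour of $v$ on a $v$–$g$ geodesic, so that $d(y,g)=d(v,g)-1$ and $y\in W_{cd}$. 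Using the gate property $g\in I(v,z)$ for every $z\in W_{dc}$, one computes $d(y,z)\leq d(y,g)+d(g,z)=d(v,z)-1$, and since $vy$ is an edge this forces $d(y,z)=d(v,z)-1$ for all such $z$; thus $W_{dc}\subseteq W_{yv}$, giving $W_{vy}\cap W_{dc}=\emptyset$ and hence $W_{vy}\subseteq W_{cd}$. Moreover $y\in W_{cd}\setminus W_{vy}$, so the inclusion is strict. This exhibits a proper $W$-set strictly inside $W_{cd}$, contradicting minimality, so $W_{cd}$ is peripheral.

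The hard part is exactly this last step—manufacturing, inside a minimal non-peripheral $W$-set, a strictly smaller $W$-set—and the crucial input there is the gatedness of $W_{dc}$, which promotes the ``nearest vertex'' $g$ to one lying on a geodesic from $v$ to \emph{every} $z\in W_{dc}$ and yields the identity $d(y,z)=d(v,z)-1$. If gatedness is not taken as known, I would replace it by a direct argument: pick $g\in W_{dc}$ nearest to $v$ and use the median of $v,y,z$ together with the convexity of $W_{dc}$ to show $v\in I(y,z)$ for every $z\in W_{dc}$. A minor point to dispatch at the outset is the degenerate case $M=K_1$, where there are no $W$-sets and the statement holds trivially for any nonempty $S$.
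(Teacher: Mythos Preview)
The paper does not supply its own proof of Lemma~\ref{hull}; it is quoted from Mulder's expansion paper and used as a black box. So there is no in-paper argument to compare your proposal against.

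Your argument is correct. The reduction via Lemma~\ref{conv} to the statement ``every proper $W$-set contains a peripheral set'' is the natural one, and the minimality step goes through as written: gatedness of the convex set $W_{dc}$ (which the paper itself records in Section~\ref{sec:gated}, citing \cite{imrich2000product}) gives $d(y,z)=d(v,z)-1$ for all $z\in W_{dc}$, hence $W_{dc}\subseteq W_{yv}$ and $W_{vy}\subsetneq W_{cd}$, contradicting minimality. Two small remarks: first, your check that $y\in W_{cd}$ is implicit but easy (if $y\in W_{dc}$ then $d(v,y)=1$ would contradict $d(v,W_{dc})\ge 2$); second, your fallback ``replace gatedness by a direct median argument'' is not needed here since the paper already grants that convex sets in median graphs are gated.
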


\noindent
As promised, let us now show that complete $i+1$-partite graphs have unbounded $i$-medianwidth and thus strongly distinguish $\mw_{i+1}$ from $\mw_i$.

\begin{lemma}\label{ipartite}
 For every $i\geq 1$, $\mw_{i}(K_{n_1,\ldots,n_{i+1}})\geq \min_{j=1}^{i+1}\{n_j\}+1$, while $\mw_{i+1}(K_{n_1,\ldots,n_{i+1}})=i+1$.
\end{lemma}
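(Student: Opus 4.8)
The plan is to prove the two bounds separately, as they require genuinely different ideas.

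\textbf{The upper bound $\mw_{i+1}(K_{n_1,\ldots,n_{i+1}})\le i+1$.}
Since $K_{n_1,\ldots,n_{i+1}}$ is $(i+1)$-colourable (the parts are exactly the colour classes), I would simply invoke the chromatic median decomposition construction of Lemma~\ref{chromdec}. Taking $c$ to be the proper colouring given by the $i+1$ parts, the underlying median graph is the lattice $L=\Box_{j=1}^{i+1}P_j$, a Cartesian product of $i+1$ paths, hence a Cartesian product of $i+1$ trees, so $L$ has tree dimension at most $i+1$. Thus $(L,\XXX)$ is a valid $(i+1)$-median decomposition, and its width is exactly the number of colour classes, namely $i+1$. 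This gives $\mw_{i+1}(K_{n_1,\ldots,n_{i+1}})\le i+1$. The matching lower bound $\mw_{i+1}\ge \omega = i+1$ follows from Lemma~\ref{omega<mw} together with the fact that the clique number of a complete $(i+1)$-partite graph is $i+1$, which settles the equality $\mw_{i+1}(K_{n_1,\ldots,n_{i+1}})=i+1$.

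\textbf{The lower bound $\mw_{i}(K_{n_1,\ldots,n_{i+1}})\ge \min_j\{n_j\}+1$.}
Here is where the real work lies. Write $G=K_{n_1,\ldots,n_{i+1}}$ with parts $V_1,\ldots,V_{i+1}$, and suppose toward a contradiction that $G$ has an $i$-median decomposition $\DDD=(M,\XXX)$ of width $w$; I want to force $w\ge \min_j n_j+1$. By Lemma~\ref{optemb} I may fix an isometric embedding $\phi$ of $M$ into a product of $i$ trees $\Box_{j=1}^{i}T^j$ that hits every fibre, and by Lemma~\ref{embedding} the $\Theta$-classes of $M$ split into $i$ directions $\Delta_1,\ldots,\Delta_i$, where each direction is the pullback of the edges of one tree factor $T^j$. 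The strategy is to use the separation property of Lemma~\ref{sep}: for each $\Theta$-class $F_{ab}$, the set $Z_{ab}\cap Z_{ba}$ separates $Y_{ab}$ from $Y_{ba}$ in $G$, so any complete bipartite ``half'' of $G$ straddling this separator must be partly absorbed into the separator. The key combinatorial point should be that, because there are only $i$ directions but $i+1$ parts, some bag must simultaneously meet every one of the $i+1$ parts in order to cover all the edges across the separations; more precisely I would argue that a projection to some single tree factor $T^j$ behaves like a tree decomposition and, by the analogue of Lemma~\ref{twprop}(ii)/(iii) for trees, a whole part (of size $\ge \min_j n_j$) together with at least one further vertex from a separated region must land in a common bag. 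Finding the right vertex to adjoin so as to get $\min_j n_j + 1$ rather than merely $\min_j n_j$ is the delicate counting step.

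\textbf{Carrying out the lower bound via a projection/pigeonhole argument.}
Concretely, I expect the cleanest route to be: project $M$ onto each tree factor and observe that the ``collapsed'' decomposition on $T^j$ is a tree decomposition of the graph $G$ with some part $V_{\sigma(j)}$ contracted away (or treated as independent). Because $G$ minus one independent part is still complete $i$-partite and every two vertices in distinct remaining parts are adjacent, a tree-decomposition counting argument (each edge lives in a bag, and the clique-type Helly argument of Lemma~\ref{omega<mw} applied factor-by-factor) forces some fibre to carry a transversal of all $i+1$ parts plus a duplicate from the smallest part; the $+1$ comes from the fact that one of the $i+1$ parts cannot be ``charged'' to any of the $i$ available directions and so its vertices must pile up in a single bag. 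The \emph{main obstacle} I anticipate is making this pigeonhole rigorous: translating ``$i$ directions cannot separate all $i+1$ parts from one another'' into a concrete statement that a single bag must contain more than $\min_j n_j$ vertices. I would handle this by choosing the part $V_{j_0}$ realizing $\min_j n_j$, applying Lemma~\ref{hull} to reduce to peripheral sets, and tracking how Lemma~\ref{sep} prevents the vertices of $V_{j_0}$ and of one neighbouring part from ever being split across all $i$ directions simultaneously, thereby trapping $\min_j n_j + 1$ vertices in a common bag.
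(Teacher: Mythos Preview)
Your upper bound is exactly the paper's argument: invoke the chromatic median decomposition (Lemma~\ref{chromdec}) for the inequality $\mw_{i+1}\le i+1$ and Lemma~\ref{omega<mw} for the matching lower bound.

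For the lower bound on $\mw_i$, you have assembled the right toolkit (the embedding into $\Box_{j=1}^k T^j$ with $k\le i$, the directions $\Delta_1,\ldots,\Delta_k$, Lemma~\ref{sep}, Lemma~\ref{hull}, and a pigeonhole on $i$ directions versus $i+1$ parts) but you have not found the combinatorial step that makes the pigeonhole bite, and you yourself flag this as the obstacle. Your proposed route---projecting to each $T^j$ to get a tree decomposition of ``$G$ with one part contracted away'' and then chasing a transversal-plus-one---does not work as stated: the projected decompositions are tree decompositions of $G$ itself (this is exactly what the proof of Theorem~\ref{imwchar} shows), nothing is contracted, and there is no reason a single projected bag should collect $\min_j n_j+1$ vertices. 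Likewise, fixing $V_{j_0}$ to be the \emph{smallest} part in advance is a red herring: the argument does not let you choose which part gets trapped.

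The paper's missing idea is this. Look only at the \emph{peripheral} sets of $M$, partitioned according to which tree factor's leaf they come from, into families $\PPP_1^M,\ldots,\PPP_k^M$. Within one family $\PPP_j^M$ the peripheral sets are pairwise laminar, so by Lemma~\ref{sep} the ``peripheral vertices'' (those in some $Z_{ab}\setminus Z_{ba}$ with $U_{ab}\in\PPP_j^M$) are pairwise non-adjacent in $K$. Since $K$ is complete multipartite, non-adjacency is transitive on its vertices, so all peripheral vertices attached to $\PPP_j^M$ lie in a \emph{single} part of $K$. With $k\le i$ families and $i+1$ parts, some part $A_{j_0}$ contains no peripheral vertex at all. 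Now each vertex $v\in A_{j_0}$ is adjacent to every peripheral vertex, hence $X^{-1}(v)$ meets every peripheral set of $M$; by Lemma~\ref{hull} and convexity, $X^{-1}(v)=V(M)$. Thus the whole of $A_{j_0}$ sits in \emph{every} bag, and any peripheral bag contains in addition at least one peripheral vertex, giving width $\ge |A_{j_0}|+1\ge \min_j n_j+1$. The crucial move you were missing is the assignment ``direction $\mapsto$ part containing its peripheral vertices'', which is what converts the pigeonhole into a statement about bags.
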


\begin{proof}
 Let $K=K_{n_1,\ldots,n_{i+1}}$. Since complete $i+1$-partite graphs are $i+1$-colourable, its clique number and a chromatic median decomposition of it establish that $\mw_{i+1}(K)=i+1$.
 
 Let $(M,\XXX)$ be an $i$-median decomposition of $K$. We can assume that $|V(M)|\geq 2$ (since $K$ is not a clique) and that for every peripheral set $U_{ab}$, 
 it must be $Z_{ab}\setminus Z_{ba}\neq \emptyset$ (otherwise we just remove the peripheral set and its bags and obtain a median decomposition of $K$ with fewer bags).
 We call the vertices in $Z_{ab}\setminus Z_{ba}$ and the sets $Z_{ab}$ for some peripheral set $U_{ab}$, 
 \emph{the peripheral vertices} and \emph{the peripheral $Z$-sets} (of $K$), respectively,  with respect to the decomposition. 
 The \emph{peripheral bags} of $(M,\XXX)$ are the bags corresponding to nodes belonging to peripheral sets of $M$.
 
 Let $k\leq i$ be the tree dimension of $M$ and let $\phi:M\rightarrow \Box_{j=1}^kT^j$ be an isometric embedding into the Cartesian product of $k$ trees $H$ as in Lemma~\ref{optemb}.
 Since the peripheral sets of $H$ correspond to the leaves of the factors of $H$, it clearly follows that 
 $$\PPP^M=\{\phi^{-1}(\pi_j^{-1}(l^j))\mid j=1,\ldots,k \text{ and } l^j \text{ is a leaf of } T^j\}.$$
 We partition the peripheral sets of $M$ as inherited by the natural partition of $\PPP^H$ into the families corresponding to the leaves of each tree factor of $H$,
 namely we partition $\PPP^M$ into the sets $\PPP_1^M,\ldots,\PPP_k^M$,
 where for $j=1,\ldots,k$, $$\PPP_j^M=\{\phi^{-1}(\pi_j^{-1}(l^j))\mid l^j \text{ is a leaf of } T^j\}.$$
 
 By Lemma~\ref{embedding}, the sets of every $\PPP_j^M$ are adjacent to $\Theta$-classes which belong to the same direction. Hence,
 $\PPP_j^M$ consists of pairwise laminar peripheral sets of $M$, so, by Lemma~\ref{sep}, two peripheral vertices of $Z$-sets corresponding 
 to different peripheral sets of the same $\PPP_j^M$ are always non-adjacent in $K$. 
 It follows that every transversal of peripheral vertices chosen from different $Z$-sets corresponding to peripheral sets from the same family $\PPP_j^M$ is an independent set in $K$. 
 Recall that $|V(M)|\geq 2$, and therefore each $\PPP_j^M$ has at least two elements.
 Moreover, since $K$ is complete multipartite, if $uv,vw\notin E(K)$, then also $uw\notin E(K)$. It follows that
 all the peripheral vertices belonging to $Z$-sets corresponding to the same $\PPP_j^M$ belong to the same part of $K$, for all $j=1,\ldots,k.$
 
 But $k\leq i$ and thus, there is a part $A_{j_0}$ of $K$ that contains no peripheral vertices with respect to $(M,\XXX)$.
 As the neighbourhood of a peripheral vertex must lie completely in the corresponding $Z$-set, every vertex of $A_{j_0}$ is contained in every peripheral $Z$-set. Namely, for every vertex $v$ in $A_{j_0}$,
 $X^{-1}(v)$ intersects every peripheral set of $M$. By the convexity of $X^{-1}(v)$ and Lemma~\ref{hull},
 $v$ must belong to every bag of $(M,\XXX)$. Hence, there are peripheral bags that contain the whole $A_{j_0}$ plus a peripheral vertex of $G$, 
 so that the width of $(M,\XXX)$ is at least $|A_{j_0}|+1$. 
 As $(M,\XXX)$ was arbitrary, the lemma follows.
\end{proof}

We call two separations $(U_1,U_2), (W_1,W_2)$ of a graph $G$ \emph{laminar} if there is a choice $i,j \in \{1,2\}$ such that $U_i\subseteq W_j$ and $U_{3-i} \supseteq W_{3-j}$, 
otherwise we say they \emph{cross}. A set of separations is called laminar if all of its members are pairwise laminar separations of $G$.

\begin{lemma}\label{lammed}
 Let $(M,\XXX)$ a median decomposition of $G$.  If the $\Theta$-classes $F_{ab}$, $F_{cd}$ are laminar in $M$, 
 then the corresponding separations $(Y_{ab},Y_{ba})$ and $(Y_{cd},Y_{dc})$ are laminar in $G$.
\end{lemma}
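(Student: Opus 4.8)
The plan is to reduce the whole statement to a single, soft observation: the $Y$-sets are \emph{monotone} in the index $W$-sets, because $Y_{ab}$ is by definition the union of the bags $X_x$ over $x\in W_{ab}$. So any containment between $W$-sets in $M$ transfers verbatim to a containment between the corresponding $Y$-sets in $G$, and laminarity is just a conjunction of two such containments. First, though, I would make sure both pairs are genuine separations of $G$. Since $M$ is bipartite we have $W_{ab}\cup W_{ba}=V(M)$, and by (M2) every vertex of $G$ lies in some bag, so $Y_{ab}\cup Y_{ba}=\bigcup_{x\in V(M)}X_x=V(G)$; the absence of a $G$-edge between $Y_{ab}\setminus Y_{ba}$ and $Y_{ba}\setminus Y_{ab}$ is exactly what is shown in (the proof of) Lemma~\ref{sep}. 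Hence $(Y_{ab},Y_{ba})$ is a separation of $G$, and the same argument applies to $(Y_{cd},Y_{dc})$.

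Next I would unwind the laminarity of the two $\Theta$-classes. Since $F_{ab}$ and $F_{cd}$ are laminar, after possibly swapping $a\leftrightarrow b$ and/or $c\leftrightarrow d$ we may assume $W_{ab}\subseteq W_{cd}$; because $W_{ab},W_{ba}$ and $W_{cd},W_{dc}$ each partition $V(M)$, taking complements shows this is equivalent to $W_{dc}\subseteq W_{ba}$. Now the monotonicity step: from $W_{ab}\subseteq W_{cd}$ we obtain $Y_{ab}=\bigcup_{x\in W_{ab}}X_x\subseteq\bigcup_{x\in W_{cd}}X_x=Y_{cd}$, and from $W_{dc}\subseteq W_{ba}$ we obtain $Y_{dc}\subseteq Y_{ba}$. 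The two inclusions $Y_{ab}\subseteq Y_{cd}$ and $Y_{ba}\supseteq Y_{dc}$ are precisely the defining condition (with the choice $i=j=1$ in the definition of laminar separations, taking $(U_1,U_2)=(Y_{ab},Y_{ba})$ and $(W_1,W_2)=(Y_{cd},Y_{dc})$) for the two separations to be laminar in $G$, which finishes the argument.

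I do not expect a genuine obstacle: the mathematical content is simply that a union of bags is monotone in its index set, so laminarity in $M$ passes straight through to $G$. The only point needing a little care is bookkeeping---matching the index conventions $i,j\in\{1,2\}$ of the definition of laminar $\Theta$-classes with those of the definition of laminar separations, and being explicit that by the symmetry of swapping the two sides of each class one may reduce to the single representative case $W_{ab}\subseteq W_{cd}$.
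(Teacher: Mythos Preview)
Your proof is correct and follows essentially the same route as the paper's: reduce laminarity of the $\Theta$-classes to an inclusion between $W$-sets (the paper phrases this as $F_{cd}\subseteq E(M[W_{ab}])$, hence $W_{cd}\subseteq W_{ab}$, which is your step after swapping labels), and then use that $Y_{ab}=\bigcup_{x\in W_{ab}}X_x$ is monotone in $W_{ab}$ to transfer the inclusion to the $Y$-sets. Your extra paragraph verifying that $(Y_{ab},Y_{ba})$ really is a separation of $G$ is a harmless addition the paper omits, since this is already implicit in Lemma~\ref{sep}.
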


\begin{proof}
 Let $F_{ab}$, $F_{cd}$ be laminar in $M$. Then, $F_{cd}\subseteq E(M[W_{ab}])$ or $F_{cd}\subseteq E(M[W_{ab}])$, otherwise $F_{ab}$, $F_{cd}$ cross. W.l.o.g  we can assume $F_{cd}\subseteq E(M[W_{ab}])$.
 Then $W_{cd}\subseteq W_{ab}$ and $W_{dc}\supseteq W_{ba}$. It follows that $Y_{cd}\subseteq Y_{ab}$ and $Y_{dc}\supseteq Y_{ba}$, therefore $(Y_{ab},Y_{ba})$, $(Y_{cd},Y_{dc})$ are laminar in $G$. 
\end{proof}

Note that the converse is in general not true. If $F_{ab}$, $F_{cd}$ cross in $M$, but at least one of the four sets $(Y_{ab}\setminus Y_{ba}) \cap (Y_{cd}\setminus Y_{dc})$, 
$(Y_{ab}\setminus Y_{ba}) \cap (Y_{dc}\setminus Y_{cd})$, $(Y_{ba}\setminus Y_{ab}) \cap (Y_{cd}\setminus Y_{dc})$, $(Y_{ba}\setminus Y_{ab}) \cap (Y_{dc}\setminus Y_{cd})$ is empty, 
then $(Y_{ab},Y_{ba})$, $(Y_{cd},Y_{dc})$ are still laminar in $G$. Moreover, one can see that the proof of Lemma~\ref{lammed} also works
if one defines laminarity not only for $\Theta$-classes, but for general minimal cuts of the median graph $M$ in the natural way, so
Lemma~\ref{lammed} holds for general laminar minimal cuts of $M$ accordingly.

In \cite{robertson1991graph}, Robertson and Seymour construct the so-called \emph{standard tree decomposition} of a graph into its \emph{tangles} 
(the definition of which we omit, since we don't need it for this paper).
To do that, they make use of the following lemma, also used by Carmesin \emph{et al.}~in \cite{carmesin2014connectivity} (where laminar separations stand under the name \emph{nested separations}), 
which we will also need.

\begin{lemma}\label{lamsep}
 For a tree decomposition $(T,\ZZZ)$ of $G$, the set of all separations of $G$ that correspond to the edges of  $T$ as in Lemma \ref{twprop}(iii) is laminar. Conversely,
 if $\{(A_i,B_i) \mid 1\leq i \leq k\}$ is a laminar set of separations of $G$, there is a tree decomposition $(T,\ZZZ)$ of $G$ such that 
 \begin{enumerate}[(i)]
  \item for $1\leq i \leq k$, $(A_i,B_i)$ corresponds to a unique edge of $T$
  \item for each edge $e$ of $T$, at least one of the separations of the two separations that corresponds to $e$ equals $(A_i,B_i)$ for some $i\in \{1,\ldots,k\}$.  
 \end{enumerate} 
\end{lemma}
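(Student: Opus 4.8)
The statement has two implications, which I would handle separately. The forward implication---that the separations read off a tree decomposition are pairwise laminar---is a direct consequence of the tree structure, whereas the converse is the substantial part and amounts to reconstructing a tree from a nested family of separations, in the spirit of the arguments of \cite{robertson1991graph,carmesin2014connectivity}.

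For the forward direction, I would fix $(T,\ZZZ)$ and two distinct edges $e,f$ of $T$, inducing via Lemma~\ref{twprop}(iii) the separations $(W_1^e,W_2^e)$ and $(W_1^f,W_2^f)$, where $W_i^e=\bigcup_{t\in T_i^e}Z_t$ and $T_1^e,T_2^e$ are the two components of $T-e$ (and similarly for $f$). The key combinatorial fact is that deleting two edges from a tree leaves exactly three components, so among the four vertex sets $T_i^e\cap T_j^f$ with $i,j\in\{1,2\}$ exactly one is empty; after relabelling orientations this means the sides are nested, say $T_1^e\subseteq T_1^f$ and hence $T_2^f\subseteq T_2^e$. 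Since taking unions of bags is monotone, $W_1^e\subseteq W_1^f$ and $W_2^f\subseteq W_2^e$, which is precisely laminarity of the two separations. As $e,f$ were arbitrary, the whole family is laminar.

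For the converse I would reconstruct the tree directly from the nested family $\NNN=\{(A_i,B_i)\}_{i=1}^k$, following the nested-separations-to-tree correspondence of \cite{robertson1991graph,carmesin2014connectivity}. I take as the nodes of $T$ the \emph{consistent orientations} of $\NNN$: each such orientation $\tau$ selects for every $i$ one side $\sigma_i(\tau)\in\{A_i,B_i\}$, subject to the consistency requirement that no two selected sides point away from one another, and two orientations are joined by an edge, labelled by the index $i$, when they agree on all indices but $i$. The bag of $\tau$ is $Z_\tau=\bigcap_{i=1}^k\sigma_i(\tau)$. I would then verify the following. First, that this flip-adjacency graph is a tree; this is the heart of the matter and is exactly where laminarity is used. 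Next, (T1): given $uv\in E(G)$, since $G$ has no edge across any separation, both $u$ and $v$ lie on a common side of each $(A_i,B_i)$, and selecting such sides produces a consistent orientation $\tau$ with $u,v\in Z_\tau$. Then (T2): the nodes whose bag contains a fixed $v$ form a subtree, because flipping a separation whose separator contains $v$ preserves $v$ in the bag while every other flip leads away from $v$. Finally, the correspondence (i)--(ii): the edge labelled $i$ separates the orientations selecting $A_i$ from those selecting $B_i$, and the unions of their bags are exactly $A_i$ and $B_i$, so the separation of $G$ induced by that edge in the sense of Lemma~\ref{twprop}(iii) equals $(A_i,B_i)$; distinctness of the given separations yields the uniqueness in (i), and this computation yields (ii). An alternative route is induction on $k$, attaching a $\subseteq$-minimal side of $\NNN$ as a new leaf, but the orientation picture keeps the bookkeeping cleanest.

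The main obstacle is the converse, and within it the claim that the flip-adjacency graph on consistent orientations is a tree: connectivity and acyclicity are precisely the content of the correspondence between nested separation systems and trees, and this is the step that genuinely uses laminarity rather than being a routine check. Once tree-ness is established, (T1), (T2) and the edge--separation correspondence follow from the short arguments above; the only extra care needed is to absorb or discard any degenerate separation with one empty side, so that each retained separation indeed labels a single edge.
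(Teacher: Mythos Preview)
The paper does not actually prove this lemma; it is stated as a known tool, attributed to \cite{robertson1991graph} and \cite{carmesin2014connectivity}, and the text moves directly on to Theorem~\ref{imwchar}. So there is no in-paper argument to compare your proposal against.

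Your sketch is precisely the approach of those references. The forward direction via the observation that $T\setminus\{e,f\}$ has three components, forcing one of the four intersections $T_i^e\cap T_j^f$ to be empty and hence the two bag-unions to be nested, is correct. For the converse, reconstructing the tree as the flip graph on consistent orientations of the nested family, with bag $Z_\tau=\bigcap_i\sigma_i(\tau)$, is exactly the standard way to turn a nested separation system into a tree decomposition, and you rightly flag the tree-ness of that flip graph as the one substantive step, with (T1), (T2) and the edge--separation correspondence following routinely. If you write this out in full, one spot deserving care is your (T1) argument: when $u$ or $v$ lies in a separator $A_i\cap B_i$ there may be more than one admissible side, and an \emph{arbitrary} selection of such sides across all $i$ need not be consistent; the references handle this by showing a consistent choice always exists (e.g.\ by extending the forced choices to a maximal consistent orientation), and your remark about absorbing degenerate separations with an empty side is also needed to secure the uniqueness in~(i).
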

 
We are ready to present the main result of this section, which roughly says that the $i$-medianwidth of a graph corresponds to the largest ``intersection'' of the best choice of
$i$ many tree decompositions of the graph. In the following theorem, when we denote tree decompositions with $\DDD^j$, we mean $\DDD^j=(T^j,\ZZZ^j)$.

\begin{theorem}\label{imwchar}
 For any graph $G$ and any integer $i\geq 1$, $$\mw_i(G)=\min\limits_{\DDD^1,\ldots,\DDD^i\in \TTT^G}\max\{|\bigcap_{j=1}^iZ^j_{t_j}|\mid t_j\in V(T^j)\}.$$
\end{theorem}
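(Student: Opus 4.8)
The plan is to prove the two inequalities separately. For ``$\le$'', I would start from any $i$ tree decompositions $\DDD^1,\ldots,\DDD^i \in \TTT^G$ (writing $\DDD^j=(T^j,\ZZZ^j)$) and build an $i$-median decomposition directly. Take $M := \Box_{j=1}^i T^j$, which is median by Lemma~\ref{cartmed} and has tree dimension at most $i$, and for a node $\mathbf a = (t_1,\ldots,t_i) \in V(M)$ set $X_{\mathbf a} := \bigcap_{j=1}^i Z^j_{t_j}$. Axiom (M1) holds because for an edge $uv\in E(G)$ each $\DDD^j$ has, by (T1), a node $t_j$ with $u,v\in Z^j_{t_j}$, so $u,v\in X_{(t_1,\ldots,t_i)}$; and (M2) holds because $X^{-1}(v) = \Box_{j=1}^i (Z^j)^{-1}(v)$ is a product of subtrees (connected, hence convex factors), so it is convex in $M$ by Lemma~\ref{cartmed}. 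This decomposition has width $\max_{\mathbf a}|X_{\mathbf a}| = \max\{|\bigcap_j Z^j_{t_j}|\}$, and minimising over all choices of the $\DDD^j$ gives $\mw_i(G) \le$ the right-hand side.

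For ``$\ge$'', I would take an optimal $i$-median decomposition $(M,\XXX)$ and manufacture $i$ tree decompositions out of it. Let $k\le i$ be the tree dimension of $M$, fix an embedding $\phi:M\to \Box_{j=1}^k T^j$ as in Lemma~\ref{optemb}, and write $\phi_j:=\pi_j\circ\phi$. By Lemma~\ref{embedding} the $\Theta$-classes of $M$ split into directions $\Delta_1,\ldots,\Delta_k$, where $\Delta_j$ consists of the classes $\phi^{-1}(\pi_j^{-1}(e^j))$, $e^j\in E(T^j)$. The members of $\Delta_j$ are pairwise laminar, so by Lemma~\ref{lammed} the separations $(Y_{ab},Y_{ba})$ of $G$ attached via Lemma~\ref{sep} to the classes in $\Delta_j$ form a laminar family; Lemma~\ref{lamsep} then yields a tree decomposition $\DDD^j=(T^j,\ZZZ^j)$ of $G$ realising exactly these separations. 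If $k<i$ I pad the list with trivial one-node decompositions whose bag is $V(G)$; intersecting with $V(G)$ changes nothing, so this leaves all transversal intersections unchanged and only helps the minimum. Concretely, one may take $T^j$ to be the tree factor itself and, for $t^j\in V(T^j)$, the bag $Z^j_{t^j}:=\bigcup_{a:\,\phi_j(a)=t^j} X_a$, the union of the median-bags in the slice projecting to $t^j$; checking (T1) and (T2) for this is routine once one notes that projections of convex (hence connected) sets of $M$ stay connected in $T^j$.

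The heart of the proof is the bound $\max\{|\bigcap_{j} Z^j_{t_j}|\}\le \max_{a\in V(M)}|X_a| = \mw_i(G)$, and this is where I expect the main difficulty. Fix a transversal $(t_1,\ldots,t_k)$ and let $A=\bigcap_j Z^j_{t_j}$. For $v\in A$ the set $X^{-1}(v)$ is convex in $M$ and, by the definition of $Z^j_{t_j}$, meets each slice $D_j:=\{a: \phi_j(a)=t_j\}$, which is itself convex as an intersection of $W$-sets. I would then show the family $\{X^{-1}(v): v\in A\}$ is pairwise intersecting and invoke the Helly property (Lemma~\ref{helly}) to obtain a node $a^\ast$ lying in all of them, so that $A\subseteq X_{a^\ast}$ and $|A|\le\mw_i(G)$. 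The technical obstacle is exactly this pairwise intersection: by Lemma~\ref{conv} each $X^{-1}(v)$ is the $\phi$-preimage of a box $\Box_j C^v_j$ of subtrees with $t_j\in C^v_j$, and for two such sets the coordinatewise medians $\mathrm{med}_{T^j}(\cdot,\cdot,t_j)$ land in $C^v_j\cap C^{v'}_j$; the delicate point is to verify that the resulting box meets $\phi(V(M))$, i.e.\ that this coordinate-wise median is realised by an actual vertex of $M$, which is what forces the two convex sets to share a vertex. Establishing this realisation, rather than the bookkeeping around it, is the step I would spend the most care on.
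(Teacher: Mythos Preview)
Your plan matches the paper's proof essentially line for line. The ``$\le$'' direction is identical, and for ``$\ge$'' you build the same tree decompositions $\DDD^j$ with bags $Z^j_{t^j}=\bigcup_{\phi_j(a)=t^j}X_a$ out of the directions $\Delta_j$ of Lemma~\ref{embedding}. The paper then observes that for tuples in the image, say $\phi(a)=(t_1,\dots,t_k)$, one has $X_a=\bigcap_j Z^j_{t_j}$ (this already uses Helly among $X^{-1}(v)$ and the slices $\phi_j^{-1}(t_j)$, all of which contain $a$), and then asserts in one line that the maximal intersections over all tuples are exactly the $X_a$. You are right to flag that tuples $(t_1,\dots,t_k)\notin\phi(V(M))$ need an argument, and your plan---show $\{X^{-1}(v):v\in A\}$ is pairwise intersecting and apply Lemma~\ref{helly}---is the natural way to supply it.

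Where you overshoot is the route you propose to pairwise intersection. The coordinate-wise median in $H$ of a point of $X^{-1}(v)$, a point of $X^{-1}(v')$, and the tuple $(t_1,\dots,t_k)$ is the $H$-median of those three points; since the third is not assumed to be in $\phi(V(M))$, median-closedness of $\phi(M)$ does not apply, so the realisation worry you name is real and this path is awkward. A cleaner argument avoids it entirely: suppose $X^{-1}(v)\cap X^{-1}(w)=\emptyset$. By halfspace separation of disjoint convex sets in median graphs (a standard consequence of Lemma~\ref{conv}) there is a $\Theta$-class $F_{xy}$ with $X^{-1}(v)\subseteq W_{xy}$ and $X^{-1}(w)\subseteq W_{yx}$. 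As the directions $\Delta_1,\dots,\Delta_k$ cover all $\Theta$-classes of $M$, we have $F_{xy}\in\Delta_{j_0}$ for some $j_0$, hence $F_{xy}=\phi^{-1}(\pi_{j_0}^{-1}(e^{j_0}))$ for an edge $e^{j_0}$ of $T^{j_0}$; then $\phi_{j_0}(X^{-1}(v))$ and $\phi_{j_0}(X^{-1}(w))$ lie in distinct components of $T^{j_0}-e^{j_0}$. But both subtrees contain $t_{j_0}$ (because $v,w\in Z^{j_0}_{t_{j_0}}$), a contradiction. This gives pairwise intersection immediately, and Helly finishes the bound $|A|\le\max_a|X_a|=\mw_i(G)$ exactly as you intended.
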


\begin{proof}
 Let $$\mu:=\min\limits_{\DDD^1,\ldots,\DDD^i\in \TTT^G}\max\{|\bigcap_{j=1}^iZ^j_{t_j}|\mid t_j\in V(T^j)\}.$$
 For $\DDD^1,\ldots,\DDD^i\in \TTT^G$, consider the pair $(M,\XXX)$, where $M=\Box_{j=1}^iT^j$ and $X_{(t_1,\ldots,t_i)}=\bigcap_{j=1}^iZ^j_{t_j}$.
 Observe that (M1) follows directly by (T1) for $\DDD^1,\ldots,\DDD^i$. Moreover, for every $v\in V(G)$, we have
 $$X^{-1}(v)=\Box_{j=1}^iZ^{j^{-1}}(v),$$ which, by Lemma \ref{cartmed}, is a convex subset of $M$, so (M2) also holds. 
 Then $(M,\XXX)$ is a valid $i$-median decomposition of $G$, therefore $$\mw_i(G) \leq \max\{\bigcap_{j=1}^iZ^j_{t_j}\mid t^j\in V(T^j)\}.$$
 Since $\DDD^1,\ldots,\DDD^i$ were arbitrary, it follows that $\mw_i(G)\leq \mu$. 
 
 For the opposite implication, consider an $i$-median decomposition $(M,\XXX)$ of $G$ of width $\mw_i(G)$. 
 Let $k\leq i$ be the tree dimension of $M$ and let $\phi:M\rightarrow H=\Box_{j=1}^kT^j$ be an isometric embedding as per Lemma~\ref{optemb}.
 By Lemma~\ref{embedding}(i),(ii), each $$\Delta_j=\{\phi^{-1}(\pi_j^{-1}(e^j))\mid e^j\in E(T^j)\}$$ is a direction in $M$. 
 By the definition of a direction, Lemma~\ref{lammed} and Lemma~\ref{lamsep}, there are tree decompositions $\DDD^j=(T^j,\ZZZ^j)$ of $G$
 obtained by each $\Delta_j$ and by Lemma~\ref{embedding}(iii), for each $t^j\in V(T^j)$ we have
 $$Z_{t^j}^j=\bigcup_{\pi_j(\phi(a))=t^j}X_a.$$
 Observe that for each $a\in V(M)$, it is $$\{a\}=\bigcap_{\substack{\pi_j(\phi(a))=t^j\\ j=1,\ldots,k}}\phi^{-1}(\pi_j^{-1}(t^j)).$$  
 It follows that
 $$X_a=\bigcap_{\substack{\pi_j(\phi(a))=t^j\\ j=1,\ldots,k}}Z_{t^j}^j.$$
 Clearly, the maximal intersections of bags, one taken from each of $\DDD^1,\ldots,\DDD^k$, correspond to the elements of $\XXX$.
 Therefore, by considering for $\mu$ the decompositions $\DDD^1,\ldots,\DDD^k$
 together with the trivial decomposition of $G$ consisting of one bag being the whole $V(G)$ and repeated $i-k$ times, we obtain
  $$\mu \leq \max\{|\bigcap_{j=1}^{k}Z^j_{t_j}|\mid t_j\in V(T^j)\}=\max\{|X_a|\mid a\in V(M)\}=\mw_i(G).$$  
\end{proof}

The combination of Theorems \ref{omega=mw} and \ref{imwchar}, imply the following, rather unnatural characterisation of the clique number.
 
 \begin{theorem}
 Let $\overline{m}=|E(G)^c|$. Then $$\omega(G)=\min\limits_{\DDD^1,\ldots,\DDD^{\overline{m}}\in \TTT^G}\max\{|\bigcap_{j=1}^iZ^j_{t_j}|\mid t_j\in V(T^j)\}.$$\qed
 \end{theorem}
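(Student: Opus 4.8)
The final statement asserts that the clique number $\omega(G)$ equals the minimum, over all choices of $\overline{m}=|E(G)^c|$ tree decompositions, of the largest common intersection of one bag taken from each. The plan is simply to combine the two preceding theorems and verify that $\overline{m}$ copies suffice to drive the $i$-medianwidth down to its limit value $\omega(G)$.

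**Approach.** By Theorem~\ref{imwchar}, the right-hand side of the claimed identity is exactly $\mw_{\overline{m}}(G)$. So the whole statement reduces to proving $\mw_{\overline{m}}(G)=\omega(G)$, where $\overline{m}$ is the number of non-edges of $G$. Since by the established hierarchy $\mw_i(G)\geq\mw(G)=\omega(G)$ for every $i$ (using Theorem~\ref{omega=mw} at the bottom of the chain), one direction is immediate, and it remains to exhibit an $\overline{m}$-median decomposition of width exactly $\omega(G)$.

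**Key steps.** First I would recall from the proof of Theorem~\ref{omega=mw} the explicit construction: starting from any median decomposition, each non-edge $\{u,v\}$ sharing a bag can be ``separated'' by one peripheral expansion $M\Box K_2$, which increases the tree dimension by at most one and removes that non-edge from $\beta(\DDD)$. Beginning with the trivial tree decomposition (tree dimension $1$, a single bag $V(G)$), there are at most $\overline{m}$ non-edges to eliminate, so after at most $\overline{m}$ such expansions we reach a decomposition all of whose bags are cliques; its underlying median graph is a subgraph of a product of at most $\overline{m}$ copies of $K_2$ and hence has tree dimension at most $\overline{m}$. This witnesses $\mw_{\overline{m}}(G)\leq\omega(G)$. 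Combined with $\mw_{\overline{m}}(G)\geq\omega(G)$ and Theorem~\ref{imwchar}, the identity follows.

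**Main obstacle.** The only genuine point requiring care is the bookkeeping on tree dimension: one must confirm that the successive peripheral expansions used in the proof of Theorem~\ref{omega=mw} do not inflate the tree dimension beyond $\overline{m}$. Each expansion $M\mapsto M\Box K_2$ adds exactly one new $\Theta$-class, and a product with $K_2$ raises tree dimension by at most one, so starting from a tree ($\dim 1$) and expanding at most $\overline{m}$ times keeps the dimension within the required bound. Once this is in place, everything else is a direct citation of the two theorems, and indeed the paper flags the resulting characterisation as ``rather unnatural'' precisely because it is a formal corollary rather than a substantive new argument.
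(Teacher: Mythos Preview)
Your proposal is correct and follows exactly the route the paper intends: the statement is presented there as an immediate corollary of Theorem~\ref{omega=mw} and Theorem~\ref{imwchar}, with no further argument given, and you have spelled out the one point the paper leaves implicit---namely that the iterative construction in the proof of Theorem~\ref{omega=mw}, started from the trivial one-bag decomposition, terminates after exactly $\overline{m}$ peripheral expansions on the hypercube $Q_{\overline{m}}=\Box_{j=1}^{\overline{m}}K_2$, which has tree dimension at most $\overline{m}$. One small cleanup: in your ``Main obstacle'' paragraph the arithmetic ``start from dimension $1$ and add one $\overline{m}$ times'' would give $\overline{m}+1$; the cleaner observation (which you already make in the ``Key steps'') is simply that the resulting graph \emph{is} $Q_{\overline{m}}$, a Cartesian product of $\overline{m}$ trees, so its tree dimension is at most $\overline{m}$ directly.
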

 
Recall that for a $k$-colourable graph a corresponding chromatic median decomposition is, by Lemma \ref{chromdec}, a $k$-median decomposition of width $k$. This immediately implies the following.

\begin{lemma}\label{mwchi}
 For any graph $G$, $\mw_{\chi(G)}\leq \chi(G)$.\qed
\end{lemma}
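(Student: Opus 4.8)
The plan is to observe that this is an immediate consequence of the chromatic median decomposition construction of Lemma~\ref{chromdec}, so the only real thing to check is that the underlying median graph of that construction has tree dimension at most $\chi(G)$. First I would set $k=\chi(G)$ and fix a proper $k$-colouring $c$ of $G$. Applying Lemma~\ref{chromdec} to $c$ yields a chromatic median decomposition $\DDD=(L,\XXX)$ of $G$ of width $k$, whose underlying median graph is the $k$-dimensional lattice $L=\Box_{i=1}^k P_i$, where $P_i$ is a path on $|c^{-1}(i)|$ vertices.

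The key step is then to bound the tree dimension of $L$. Since each path $P_i$ is a tree and $L$ is by construction already a Cartesian product of $k$ trees, the identity map witnesses an isometric embedding of $L$ into a Cartesian product of $k$ trees; hence the tree dimension of $L$ is at most $k$. Consequently $\DDD$ belongs to $\MMM^G_k=\MMM^G_{\chi(G)}$, that is, it is a valid $\chi(G)$-median decomposition of $G$, and its width is exactly $k=\chi(G)$.

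Finally, by the definition of $i$-medianwidth as the least width of any $i$-median decomposition, the mere existence of the decomposition $\DDD$ of width $\chi(G)$ immediately gives $\mw_{\chi(G)}(G)\leq\chi(G)$, as desired. I expect no genuine obstacle here: essentially all of the content is already packaged in Lemma~\ref{chromdec}, and the only additional remark needed is the (essentially definitional) observation that a Cartesian product of $k$ paths has tree dimension at most $k$.
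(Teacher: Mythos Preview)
Your proposal is correct and matches the paper's own argument essentially verbatim: the paper simply remarks that the chromatic median decomposition of Lemma~\ref{chromdec} is a $k$-median decomposition of width $k$ (since $L=\Box_{i=1}^k P_i$ is already a Cartesian product of $k$ trees), and the lemma follows immediately with $k=\chi(G)$.
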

 
Moreover, to obtain Theorem~\ref{chichar} we can clearly choose to restrict to $\Theta$-smooth median decompositions where the underlying median graph is always a Cartesian product of trees. 
In such a case, by $\Theta$-smoothness all the tree decompositions obtained following the directions in the Cartesian product as in Lemma~\ref{embedding} are smooth. 
Let $\TTT^G_{smooth}$ be the set of smooth tree decompositions of $G$.
A direct adaptation of the proof of Theorem \ref{imwchar} combined with Theorem \ref{chichar} provide an alternative (and seemingly unintuitive) characterization of the chromatic number 
with respect to smooth tree decompositions.

\begin{theorem}
 A graph $G$ is $k$-chromatic if and only if $$\min\limits_{\DDD^1,\ldots,\DDD^k\in \TTT^G_{smooth}}\max\{|\bigcap_{j=1}^kZ^j_{t_j}|\mid t_j\in V(T^j)\}=k.$$\qed
\end{theorem}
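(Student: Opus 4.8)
The plan is to mirror the proof of Theorem~\ref{imwchar} in the $\Theta$-smooth setting and then feed in the identity $\smw(G)=\chi(G)$ of Theorem~\ref{chichar}. Write $\mu$ for the quantity on the right-hand side. As recorded in the remark preceding the statement, $\smw(G)$ is already attained by a $\Theta$-smooth median decomposition whose underlying median graph is a Cartesian product of trees --- concretely the chromatic median decomposition of Lemma~\ref{chromdec}, which lives on $\Box_{j=1}^{\chi(G)}P_j$ and has width $\chi(G)$. The first step is to set up the same dictionary as in Theorem~\ref{imwchar}, now carrying smoothness across it: given a $\Theta$-smooth decomposition $(M,\XXX)$ with $M=\Box_{j=1}^{k}T^j$, the directions $\Delta_j$ of Lemma~\ref{embedding} produce tree decompositions $\DDD^j=(T^j,\ZZZ^j)$ with $X_a=\bigcap_{j}Z^j_{t_j}$ and $\max_a|X_a|=\max_t|\bigcap_j Z^j_{t_j}|$, and $\Theta$-smoothness of $(M,\XXX)$ forces each $\DDD^j$ to be smooth, since for an edge $ss'$ of $T^{j_0}$ the corresponding $\Theta$-class of $M$ has $Z_{ab}=Z^{j_0}_{s}$ and $Z_{ba}=Z^{j_0}_{s'}$, so the single-edge $\Theta$-classes of $T^j$ inherit $|Z_{ab}\setminus Z_{ba}|=|Z_{ba}\setminus Z_{ab}|=1$.

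For the upper bound I would take $k=\chi(G)$ and apply this dictionary to the chromatic median decomposition: it yields $\chi(G)$ smooth tree decompositions whose bagwise intersections reproduce its bags, so $\mu\le\chi(G)=k$. Padding with the one-bag tree decomposition $Z_t=V(G)$, which is vacuously smooth and contributes $V(G)$ to every intersection, lets me reach exactly $k$ factors without changing the value and shows $\mu$ is non-increasing in the number of decompositions. For the lower bound I would run the dictionary in reverse: from a competing family $\DDD^1,\dots,\DDD^k$ I form $M=\Box_{j=1}^{k}T^j$ and $X_a=\bigcap_j Z^j_{t_j}$, which by Lemma~\ref{cartmed} satisfies (M1),(M2) and has width $\max_t|\bigcap_j Z^j_{t_j}|$; provided this median decomposition is $\Theta$-smooth, its width is at least $\smw(G)=\chi(G)$, giving $\mu\ge\chi(G)$. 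Combining the two bounds with Theorem~\ref{chichar} yields $\mu=\chi(G)$ (for $k<\chi(G)$ every admissible family forces $\mu\ge\chi(G)>k$, and for $k\ge\chi(G)$ the padded chromatic decomposition gives $\mu=\chi(G)$), so $\mu=k$ holds precisely when $\chi(G)=k$, which is the asserted equivalence.

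The hard part --- and the genuine difference from Theorem~\ref{imwchar} --- is the reverse direction of the dictionary. In Theorem~\ref{imwchar} every tuple of tree decompositions assembles into a valid $i$-median decomposition for free, but here I additionally need the assembled $(M,\XXX)$ to be $\Theta$-smooth. The first half of $\Theta$-smoothness transfers to every $\Theta$-class of $M$ from the smoothness of the factors, exactly as computed above. The delicate requirement is the second half, convexity of $X^{-1}(v_a)\cup X^{-1}(v_b)$: writing $X^{-1}(v_a)=\Box_j Z^{j^{-1}}(v_a)$ and $X^{-1}(v_b)=\Box_j Z^{j^{-1}}(v_b)$, this union is convex only when the two boxes agree in every coordinate other than the active one $j_0$, which an arbitrary choice of the $\DDD^j$ need not satisfy. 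Consequently the lower bound really constrains the minimisation to those families of smooth tree decompositions that \emph{do} assemble into a $\Theta$-smooth decomposition, and the main work is to justify restricting to such families without loss --- equivalently, to show that any family whose maximal intersection undercuts $\chi(G)$ could be repaired into a $\Theta$-smooth median decomposition of no larger width, contradicting $\smw(G)=\chi(G)$. I expect this repair/compatibility step, rather than the bookkeeping of the dictionary or the invocation of Theorem~\ref{chichar}, to be where the real difficulty lies.
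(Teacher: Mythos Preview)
Your approach is exactly what the paper itself sketches: the paper offers no proof beyond the remark that ``a direct adaptation of the proof of Theorem~\ref{imwchar} combined with Theorem~\ref{chichar}'' does the job, and marks the statement with \qed. You have carried out that adaptation honestly and located the only non-routine step --- the reverse direction of the dictionary, where a family of smooth tree decompositions must assemble into a $\Theta$-smooth median decomposition. Your diagnosis that the first half of $\Theta$-smoothness transfers while the convexity of $X^{-1}(v_a)\cup X^{-1}(v_b)$ need not is correct.

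The gap you identify is not just hard; it is fatal, and the statement as written appears to be false. Take $G=C_5$, so $\chi(G)=3$, and the three smooth tree decompositions on the path with bags
\[
\DDD^1:\ \{1,2,3\},\{1,3,4\},\{1,4,5\};\qquad
\DDD^2:\ \{2,3,4\},\{2,4,5\},\{1,2,5\};\qquad
\DDD^3:\ \{3,4,5\},\{1,3,5\},\{1,2,3\}.
\]
No three-set appears in all three lists, so every triple intersection has size at most $2$; thus the minimum in the statement is $2$, not $3=\chi(C_5)$. The assembled decomposition on $P_3\Box P_3\Box P_3$ fails $\Theta$-smoothness exactly where you predicted: for the $\Theta$-class coming from the first edge of $T^1$ one gets $v_a=2$, $v_b=4$, with $X^{-1}(2)=\{1\}\times\{1,2,3\}\times\{3\}$ and $X^{-1}(4)=\{2,3\}\times\{1,2\}\times\{1\}$, whose union is not even connected in $P_3^3$. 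So the ``repair/compatibility step'' you flag cannot be carried out in general --- smooth tree decompositions can genuinely undercut $\chi(G)$, and the paper's hand-wave does not go through.
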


It might still be interesting to study the non-increasing sequence of the corresponding \emph{smooth $i$-medianwidth} invariants, 
starting from treewidth and converging to the chromatic number, a direction which we will not pursue in this paper.

\section{More General Decompositions}\label{sec:gated}

Let $K$ be a subset of vertices in a graph $G$, and
let $u \in V(G)$. A \emph{gate} for $u \in K$ is a vertex $x \in K$ such that $x$ lies in $I(u, w)$, for
each vertex $w \in K$. Trivially, a vertex in $K$ is its own gate. Moreover, if $u$ has a gate in $K$, then it must be unique and it is the vertex in
$K$ closest to $u$.  A subset $K$ of $V(G)$ is called \emph{gated},
if every vertex $v$ of $G$ has the gate $p_K(v)$ in $K$.

Some general properties of gated sets are
that every gated set is also geodesically convex (see \cite{dress1987gated}), that a map which maps a vertex to
its gate in a gated set is a retraction (see Lemma 16.2 in \cite{imrich2000product}), that the intersection
of two gated sets yields a gated set again (see Lemma 16.3 in \cite{imrich2000product}) and, very importantly, that the family of
gated sets has the Helly property (see Corollary 16.3 in \cite{imrich2000product}). In the case of median graphs, 
gated sets are exactly the convex sets (see Lemma 12.5 in \cite{imrich2000product}). 

Lemma~\ref{omega<mw}, which essentially says that cliques behave as a compact, inseparable object of the decomposed graph, 
can be also seen in the following way: when the decomposition is seen as a hypergraph on the vertex set 
of the decomposed graph with hyperedges the bags of the decomposition, a tree or median decomposition becomes a
\emph{conformal hypergraph}\footnotemark \ that covers the edges of the decomposed graph. 

If we want to decompose a graph modelling it after any certain kind of graphs and in a way that the most characteristic properties of tree and median decompositions are preserved, 
like the one described above, then gated sets seem to provide a natural tool for such decompositions, exactly like convex sets do for median decompositions.
\footnotetext{A hypergraph $H$ is conformal if the hyperedges of its dual hypergraph H* satisfy the Helly Property.}

Let $\HHH$ be a class of graphs. An \emph{$\HHH$-decomposition} $\DDD$ of a graph $G$ is a pair $(H,\XXX)$, where
$H\in \HHH$ and $\XXX=(X_h)_{h\in V(H)}$ is a family of subsets of $V(G)$, such that 
\begin{enumerate}[(i)]
 \item[($\HHH$1)] for every edge $uv\in E(G)$ there exists $h \in V(H)$ with $u,v \in X_h$, 
 \item[($\HHH$2)] for every $v\in V(G)$, the set $X^{-1}(v):=\{h\in V(H) \mid v\in X_h\}$ is a non-empty gated set of $H$.
\end{enumerate} 
The \emph{width} of an $\HHH$-decomposition $\DDD=(H,\XXX)$ is the number $$\max\{|X_h| \mid h\in V(H)\}.$$
The \emph{$\HHH$-width} $\hw(G)$ of $G$ is the least width of any $\HHH$-decomposition of $G$.

Since the Helly property holds for the gated sets of any graph, a direct imitation of the proof of Lemma \ref{omega<mw} shows that every clique of a graph has to be fully contained in some bag of any 
$\HHH$-decomposition, so that $\omega(G)\leq\hw(G)$ (and hence $\hw$ is an unbounded parameter when considered on all graphs).
Moreover, the convexity of gated sets ensures that the analogue of Lemma~\ref{mincut} holds for general $\HHH$-decompositions as well. 
Lastly, general laminar cuts in the decomposition graph $H$ correspond to laminar separations in the decomposed graph $G$, exactly as in Lemma~\ref{lammed}.

In the case that the structure of the gated sets of the graphs of a class $\HHH$ is relatively poor, the corresponding decompositions are not very flexible. 
For example, the gated sets of a clique are only the singletons and the whole clique itself. 
For a vertex set $S\subseteq V(G)$, let $\CCC^G(S)$ be the set of components of $G\setminus S$.
It is easy to see then that when $\KKK$ is the graph class of all cliques, the corresponding width parameter is 
$$\kw(G)=\min_{S\subseteq V(G)}\max\{|S\cup C|\mid C\in \CCC^G(S)\}.$$

On the other hand, letting $\HHH$ be the class of cliques doesn't seem to be the natural direction one would want to take, when trying to decompose a graph. 
In general, one would want to decompose a graph in a sparser graphlike structure than the graph itself, not in denser ones like the cliques, 
so in such cases a richer structure of gated sets than the trivial ones of the cliques might then be expected. 

For example, there is a wide variety of generalizations of median graphs, whose structure is closely related to gated sets. 
A bipartite generalization of median graphs are the modular graphs. Most of other generalizations of
median graphs connected with gated sets are non-bipartite. These include quasi-median graphs
\cite{bandelt1994quasi,mulder1980interval}, pseudo-median graphs \cite{bandelt1991pseudo}, weakly median graphs \cite{bandelt2000decomposition}, pre-median graphs \cite{chastand2001fiber}, 
fiber-complemented graphs \cite{chastand2001fiber}, weakly modular graphs \cite{brevsar2002natural,chepoi1989classification}, cage-amalgamation graphs \cite{brevsar2009cage},
absolute C-median graphs \cite{brevsar2002natural} and bucolic graphs \cite{brevsar2013bucolic}. 

\section{Concluding Remarks}\label{sec:remarks}

There are numerous directions worth looking into that stem from the development of this theory. We highlight some of the ones that we consider the most important.

\subsection{Brambles}

In a graph $G$, we say that two subsets of $V(G)$ \emph{touch} if they have a vertex in common or there is an edge in $G$ between them. 
A \emph{bramble} $\BBB$ is a set of mutually touching connected vertex sets of $G$. A subset of $V(G)$ is said to \emph{cover} $\BBB$ if it meets every element of $\BBB$.
The least number of vertices that cover a bramble is the \emph{order} of that bramble. We denote the set of all brambles of $G$ with $\mathscr{B}^G$.

Brambles are canonical obstructions to small treewidth, as shown by the following Theorem of \cite{seymour1993graph}, sometimes also called the \emph{treewidth duality Theorem}.

\begin{theorem}[Seymour \& Thomas]\label{bramchar}
 Let $k\geq 0$ be an integer. A graph has treewidth at least $k$ if and only if it contains a bramble of order strictly greater than $k$.
\end{theorem}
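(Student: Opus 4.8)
The plan is to prove the equivalent two-sided statement that the maximum order of a bramble of $G$ equals $\tw(G)+1$. Granting this, $G$ has a bramble of order $>k$ precisely when the maximum bramble order exceeds $k$, i.e.\ when $\tw(G)+1>k$, i.e.\ when $\tw(G)\ge k$, which is the asserted equivalence. I would establish the equality as two inequalities: the easy direction, that every bramble has order $\le\tw(G)+1$, and the hard direction, that if every bramble has order $\le k$ then $\tw(G)\le k-1$.

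For the easy direction I would fix any tree decomposition $(T,\ZZZ)$ and any bramble $\BBB$, and show that a single bag $Z_t$ covers $\BBB$; this yields order $\le|Z_t|\le\tw(G)+1$. The argument is an orientation/sink argument. For each edge $e=t_1t_2$ of $T$ set $S_e=Z_{t_1}\cap Z_{t_2}$, which by Lemma \ref{twprop}(iii) separates the two sides $W_1,W_2$ of the decomposition. Any $B\in\BBB$ avoiding $S_e$ is connected, hence lies entirely in $W_1\setminus S_e$ or in $W_2\setminus S_e$; and two bramble elements on opposite sides could not touch, so at most one side can host such a $B$. I orient $e$ towards that side (arbitrarily if $S_e$ already covers $\BBB$). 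Since every orientation of a finite tree has a sink $t$, I claim $Z_t$ covers $\BBB$: otherwise some $B\in\BBB$ avoids $Z_t$, the subgraph $\{s:B\cap Z_s\neq\emptyset\}$ is a connected subtree missing $t$, so it lies in a single component $T_{t'}$ of $T-t$; then $B$ sits on the far side of the edge $tt'$ while avoiding $S_{tt'}\subseteq Z_t$, which forces $tt'$ to point away from $t$, contradicting that $t$ is a sink.

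The hard direction is the substantial one, and I would argue its contrapositive: assuming $\tw(G)\ge k$, i.e.\ that $G$ admits no tree decomposition of width $\le k-1$, I would produce a bramble of order $>k$. The cleanest route passes through \emph{havens}: a haven of order $k+1$ assigns to every $X\subseteq V(G)$ with $|X|\le k$ a component $\beta(X)$ of $G-X$ such that $\beta(Y)\subseteq\beta(X)$ whenever $X\subseteq Y$. A bramble of order $>k$ and a haven of order $k+1$ are interchangeable: from a bramble one takes $\beta(X)$ to be the component of $G-X$ meeting some element avoiding $X$, which is unique by the touching property and consistent under nesting; conversely a haven assembles a family of pairwise-touching connected sets admitting no cover of size $\le k$. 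It therefore suffices to build a haven of order $k+1$ out of the failure to decompose, and I would do this by a recursive separator construction: attempt to build a width-$(k-1)$ decomposition top-down by repeatedly splitting each piece along a separator of size $<k$; the places where this process is forced to stall pin down, for every small $X$, a distinguished ``robber-safe'' component of $G-X$, consistent across nested $X$, which is exactly the desired haven.

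The hard part will be precisely this construction: showing that the absence of a width-$(k-1)$ tree decomposition yields a \emph{globally consistent} choice $\beta$ (equivalently, a genuinely pairwise-touching bramble) rather than merely local obstructions. This is the combinatorial core of the theorem of Seymour and Thomas, and is most transparently organised through the monotone cops-and-robber search game: $k$ cops catch the visible robber if and only if $\tw(G)\le k-1$, where the ``easy'' implication is that the cops can shadow a narrow tree decomposition, while the reverse — converting a winning cop strategy into such a decomposition, together with the determinacy and monotonicity of the game — is the delicate step that ultimately certifies the bramble.
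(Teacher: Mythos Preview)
The paper does not prove this theorem at all: it is quoted as a known result of Seymour and Thomas with a citation to \cite{seymour1993graph}, and is used only to motivate the discussion of brambles in the concluding remarks. There is therefore no ``paper's own proof'' to compare your proposal against.

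For what it is worth, your outline is the standard one. The easy direction via orienting the edges of a tree decomposition towards the side containing uncovered bramble elements and locating a sink bag is correct as written. For the hard direction you correctly identify the substantive content --- the equivalence with havens of order $k+1$ and the monotone cops-and-robber game --- and you are honest that turning the nonexistence of a width-$(k-1)$ tree decomposition into a globally consistent haven (equivalently, the monotonicity of the game) is where the real work lies. As a plan this is fine; as a proof it is of course only a pointer to the Seymour--Thomas argument, which is exactly how the paper treats the result as well.
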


Inspired by Theorem \ref{imwchar} and its proof, one might think that brambles with large minimum intersections of covers are the corresponding obstructions to $i$-medianwidth.
Using Theorem \ref{imwchar}, it is not difficult to prove that the quantity 
$$\max\limits_{\BBB^1,\ldots,\BBB^i\in \mathscr{B}^G}\min\{|\bigcap_{j=1}^iX^j|\mid X^j \text{ covers } \BBB\}$$
is a lower bound for $\mw_i(G)$.

However, it is unknown to us if $\mw_i(G)$ can be upper-bounded by such a quantity and thus, we do not know if this is the correct obstructing notion characterizing large $i$-medianwidth.
We believe this is an important question towards a better comprehension of this theory.

\subsection{Towards the Chromatic Number}

A median decomposition $(M,\XXX)$ is called \emph{weakly-$\Theta$-smooth} if for every $\Theta$-class $F_{ab}$ of $M$,
we have that both $Z_{ab}\setminus Z_{ba}$ and $Z_{ba}\setminus Z_{ab}$ are non-empty, and whenever $|Z_{ab}|\leq |Z_{ba}|$,
there is an injective function $s_{ab}:Z_{ab}\setminus Z_{ba} \rightarrow Z_{ba}\setminus Z_{ab}$ such that:
\begin{itemize} 
 \item $X^{-1}(v)\cup X^{-1}(s_{ab}(v))$ is convex in $M$,
 \item for every $xy\in F_{ab}$ with $x\in U_{ab}$ and $y\in U_{ba}$, 
       \begin{center}
         $v \in X_x$ if and only if $s_{ab}(v) \in X_y$.
       \end{center}
\end{itemize}

As is easily seen, tree decompositions are always weakly-$\Theta$-smooth. Moreover, every $\Theta$-smooth median decomposition can be seen to be weakly-$\Theta$-smooth, 
by defining $s_{ab}$ to send the single element of $Z_{ab}\setminus Z_{ba}$ to the single element of $Z_{ba}\setminus Z_{ab}$. 

Consider the following variation of a third axiom in the definition of median decompositions:

\begin{itemize}
 \item[](M3') $\DDD$ is weakly-$\Theta$-smooth.
\end{itemize}

Let the \emph{weakly-smooth-medianwidth} $\wsmw(G)$ of $G$ to be the minimum width over all median decompositions of $G$ that additionally satisfy (M3'). 
A direct adaptation of the proof of Lemma \ref{smooth} shows that it is still the case that $\wsmw(G)=\chi(G)$. Nevertheless, 
even though \emph{weak $\Theta$-smoothness} is indeed a weaker notion than $\Theta$-smoothness, 
it does not seem to enhance substantially more our understanding of the chromatic number compared to $\Theta$-smoothness.

In the end, the third axiom ensures the following: if you add edges to a graph to make every bag of a median decomposition of it a clique, the new graph will be \emph{perfect}, 
one whose clique number and chromatic number coincide.
We believe though, that if there is a substantially better notion than smoothness that captures this intuition, it will be a much less artificial one than weak $\Theta$-smoothness.

\subsection{Algorithmic Considerations}

Even though treewidth is known to have a wide variety of algorithmic applications using dynamic programming techniques, this can in general not be the case for $i$-medianwidth when $i\geq 2$:
by Lemma \ref{mwchi}, all bipartite graphs have $2$-medianwidth at most $2$ and
most of the graph problems considered on graphs of bounded treewidth remain as hard in the bipartite case as in the general case.

However, it might still be meaningful to study \textsc{Minimum Vertex Cover}
(or \textsc{Maximum Independent Set}) on graphs of bounded $i$-medianwidth, 
which are known to be efficiently solvable on bipartite graphs.

Lastly, by \cite{bodlaender1993linear}, deciding the treewidth of a graph (which is the 1-medianwidth) is fixed-parameter tractable, while by \cite{downey1995fixed}, 
deciding the clique number (which is the infinite version of $i$-medianwidth)
is complete for the complexity class W[1]. It is unknown to us what the complexity of deciding the $i$-medianwidth of a graph is, for any fixed $i\geq 2$.

\section*{Acknowledgements}
The author would like to thank Martin Grohe for discussions on the subject, and Hans Bandelt along with the rest of the audience in Reinhard Diestel's research seminar for
noticing a gap in an earlier version of Theorem~\ref{imwchar}.

\bibliographystyle{abbrv}
\bibliography{mwArxiv} 

\end{document}